\documentclass[aop]{imsart}

\RequirePackage{amsthm,amsmath,amsfonts,amssymb}
\RequirePackage{mathtools}
\RequirePackage[numbers,sort&compress]{natbib}
\RequirePackage[colorlinks,citecolor=blue,urlcolor=blue]{hyperref}
\RequirePackage[shortlabels]{enumitem}

\makeatletter
\def\journal@name{}%
\def\journal@url{}%
\makeatother

\startlocaldefs

\numberwithin{equation}{section}

\theoremstyle{plain}
\newtheorem{theorem}{Theorem}[section]
\newtheorem{lemma}[theorem]{Lemma}
\newtheorem{proposition}[theorem]{Proposition}
\newtheorem{corollary}[theorem]{Corollary}

\theoremstyle{definition}
\newtheorem{definition}[theorem]{Definition}
\newtheorem{remark}[theorem]{Remark}

\newcommand{\Ehat}{\hat{\mathbb{E}}}
\newcommand{\Vhat}{\hat{V}}
\newcommand{\R}{\mathbb{R}}

\newcommand{\cP}{\mathcal{P}}
\newcommand{\cF}{\mathcal{F}}
\newcommand{\cH}{\mathcal{H}}
\newcommand{\cN}{\mathcal{N}}
\newcommand{\ip}[2]{\langle #1, #2 \rangle}
\newcommand{\Xbar}{\bar{X}_n}
\newcommand{\op}{\mathrm{op}}

\endlocaldefs

\begin{document}

\begin{frontmatter}

\title{Exponential concentration inequalities for independent random
vectors under sublinear expectations}
\runtitle{Concentration under sublinear expectations}

\begin{aug}
\author[A]{\fnms{Nahom}~\snm{Seyoum}\ead[label=e1]{nahom.seyoum@yale.edu}}

\address[A]{Department of Statistics and Data Science,
Yale University\printead[presep={,\ }]{e1}}
\end{aug}

\begin{keyword}[class=MSC]
\kwd[Primary ]{60E15}
\kwd{60G42}
\kwd[; secondary ]{60F10}
\kwd{60G65}
\end{keyword}

\begin{keyword}
\kwd{Sublinear expectation}
\kwd{concentration inequality}
\kwd{exponential inequality}
\kwd{martingale difference}
\kwd{Azuma--Hoeffding}
\kwd{Bernstein inequality}
\kwd{independent random vectors}
\end{keyword}

\begin{abstract}
Li and Hu recently established variance-type $O(1/n)$ bounds for the
sample mean of independent random vectors under sublinear expectations.
We extend their results to the exponential concentration regime. For
bounded, independent $\R^d$-valued random vectors $\{X_i\}_{i=1}^n$
under a regular sublinear expectation $\Ehat$, we prove:
(i)~an Azuma--Hoeffding-type inequality showing that the distance from
the sample mean to the Minkowski average of the expectation sets has
sub-Gaussian tails;
(ii)~a sharper Bernstein-type inequality incorporating the variance
parameter of Li and Hu;
(iii)~a dimension-free bound for identically distributed vectors via
the matrix Freedman inequality; and
(iv)~an explicit construction demonstrating the optimality of the
sub-Gaussian rate.
\end{abstract}

\end{frontmatter}


\section{Introduction}\label{sec:intro}

Let $(\Omega, \cH, \Ehat)$ be a regular sublinear expectation space
with $\cH = C_{b.\mathrm{Lip}}(\Omega)$. By the representation
theorems of Denis, Hu and Peng~\cite{DenisHuPeng2011} and Hu and
Peng~\cite{HuPeng2009}, there exists a convex and weakly compact set
$\cP$ of probability measures on $(\Omega, \mathcal{B}(\Omega))$ such
that
\begin{equation}\label{eq:representation}
\Ehat[X] = \sup_{P \in \cP} E_P[X]
  \quad \text{for } X \in \cH.
\end{equation}
Under this framework, the ``expectation'' of a random vector
$X_i \in L^1(\Omega;\R^d)$ is not a single point but a convex, compact
set
$\Theta_i = \{E_P[X_i] : P \in \cP\} \subset \R^d$.

For independent random vectors $\{X_i\}_{i=1}^n$ under~$\Ehat$
(Definition~\ref{def:independence}), Li and Hu~\cite{LiHu2024}
recently established the moment inequality
\begin{equation}\label{eq:LiHu-bound}
\Ehat\!\left[\rho_\Theta^2\!\left(\frac{1}{n}\sum_{i=1}^n
  X_i\right)\right]
  \leq \frac{\bar\sigma_n^2}{n},
\end{equation}
where
$\Theta = \bigl\{\frac{1}{n}\sum_{i=1}^n \theta_i :
  \theta_i \in \Theta_i\bigr\}$
is the Minkowski average of the expectation sets,
$\rho_\Theta(x) = \inf_{\theta \in \Theta} |x - \theta|$, and
$\bar\sigma_n^2 = \sup_{i \leq n}
  \inf_{\theta_i \in \Theta_i} \Ehat[|X_i - \theta_i|^2]$.
This generalizes classical variance bounds for sample means to the
sublinear setting and removes a convex polytope assumption on
$\Theta_i$ required in earlier work of Fang et
al.~\cite{FangPengShaoSong2019}.

The bound~\eqref{eq:LiHu-bound}, via Markov's inequality, yields only
polynomial tail decay:
\[
\Vhat\!\left(\rho_\Theta\!\left(\frac{1}{n}\sum_{i=1}^n X_i\right)
  > t\right) \leq \frac{\bar\sigma_n^2}{nt^2},
\]
where $\Vhat(A) := \sup_{P \in \cP} P(A)$ is the upper capacity. Yet
when each~$X_i$ is almost surely bounded, classical intuition suggests
that \emph{sub-Gaussian} tails of the form $\exp(-cnt^2)$ should be
attainable, as they are under a single probability
measure~\cite{BoucheronLugosiMassart2013, Vershynin2018}. Closing the
gap between polynomial and exponential concentration under
distributional uncertainty is the central objective of this paper.

\subsection{Main results}\label{sec:results}

We prove four results, stated informally here and made precise in
Sections~\ref{sec:concentration}--\ref{sec:sharpness}.

\begin{longlist}[(iv)]
\item[\textup{(i)}]
\textit{Azuma--Hoeffding inequality}
(Corollary~\ref{thm:azuma-cor}). For $|X_i| \leq M$ a.s.,
\[
\Vhat\!\left(\rho_\Theta\!\left(\Xbar\right) > t\right)
  \leq 2 \cdot 5^d
  \exp\!\left(-\frac{nt^2}{32 M^2}\right).
\]
The prefactor $5^d$ arises from an $\varepsilon$-net covering of
$S^{d-1}$.

\item[\textup{(ii)}]
\textit{Bernstein inequality}
(Corollary~\ref{thm:bernstein-cor}). Under the same condition,
\[
\Vhat\!\left(\rho_\Theta\!\left(\Xbar\right) > t\right)
  \leq 2 \cdot 5^d
  \exp\!\left(-\frac{nt^2}{8\bar\sigma_n^2
    + \frac{8Mt}{3}}\right).
\]
This interpolates between the sub-Gaussian regime
($t \ll \bar\sigma_n^2/M$) and the sub-exponential regime
($t \gg \bar\sigma_n^2/M$), and recovers the Li--Hu
bound~\eqref{eq:LiHu-bound} upon integration.

\item[\textup{(iii)}]
\textit{Dimension-free bound}
(Theorem~\ref{thm:dimfree}). We bypass the covering argument via the
matrix Freedman inequality~\cite{Tropp2011}:
\[
\Vhat\!\left(\rho_{\Theta}\!\left(\Xbar\right) > t\right)
  \leq (d+1)
  \exp\!\left(-\frac{nt^2}{2\bar\sigma_n^2
    + \frac{4Mt}{3}}\right).
\]
The prefactor is polynomial in $d$ rather than exponential, the
constants in the exponent are tighter, and no identical distribution
assumption is required.

\item[\textup{(iv)}]
\textit{Optimality}
(Theorem~\ref{thm:sharpness}). We construct an explicit sublinear
expectation space in which the tail probability decays as
$\exp(-cnt^2)$ and no faster, so the sub-Gaussian rate is
sharp.
\end{longlist}

\subsection{Related work}\label{sec:related}

Concentration under sublinear expectations has been studied primarily
in the scalar case. Fang et al.~\cite{FangPengShaoSong2019} obtained
rates of convergence for Peng's law of large numbers, and Hu, Li and
Li~\cite{HuLiLi2021} improved these rates. Zhang~\cite{Zhang2016}
established exponential inequalities for scalar sublinear expectations.
Peng~\cite{Peng2019} developed the foundational theory of
$G$-expectations and the associated central limit theorem. For an
overview of classical concentration, see Boucheron, Lugosi and
Massart~\cite{BoucheronLugosiMassart2013}. To the best of our
knowledge, the present work constitutes the first exponential
concentration inequalities for the multivariate sample mean under
sublinear expectations in terms of the set-valued
distance~$\rho_\Theta$.


\section{Preliminaries}\label{sec:prelim}

We work throughout with a complete separable metric space $\Omega$ and
set $\cH = C_{b.\mathrm{Lip}}(\Omega)$. A functional
$\Ehat : \cH \to \R$ is a \emph{regular sublinear expectation} if it
satisfies monotonicity, constant preservation, subadditivity, positive
homogeneity, and the regularity property that $X_n \downarrow 0$
implies $\Ehat[X_n] \downarrow 0$. We refer to Peng~\cite{Peng2019}
for full details.

\begin{theorem}[Representation {\cite{DenisHuPeng2011,HuPeng2009}}]
\label{thm:representation}
There exists a convex and weakly compact set of probability measures
$\cP$ on $(\Omega, \mathcal{B}(\Omega))$ such
that~\eqref{eq:representation} holds. The associated \emph{upper
capacity} is $\Vhat(A) := \sup_{P \in \cP} P(A)$.
\end{theorem}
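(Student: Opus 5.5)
The plan is the standard Daniell--Stone plus Prokhorov route. First I extend $\Ehat$ from $\cH = C_{b.\mathrm{Lip}}(\Omega)$ to $C_b(\Omega)$, and I show that $\Ehat$ is the upper envelope of the linear functionals it dominates. By the Hahn--Banach theorem, for each $X_0 \in \cH$ there is a linear functional $L \le \Ehat$ on $\cH$ with $L[X_0] = \Ehat[X_0]$. Sublinearity gives the domination, and monotonicity together with constant preservation make $L$ positive with $L[1]=1$. So if $\mathcal{L}$ denotes the set of all such linear $L \le \Ehat$, then $\Ehat[X] = \max_{L \in \mathcal{L}} L[X]$. The set $\mathcal{L}$ is convex by construction.

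Next I turn each $L$ into a probability measure. If $X_n \in \cH$ and $X_n \downarrow 0$, then $0 \le L[X_n] \le \Ehat[X_n] \downarrow 0$ by regularity. This is exactly the Daniell condition on the vector lattice $\cH$, which is a lattice containing the constants and generating the Borel $\sigma$-field of the metric space $\Omega$. The Daniell--Stone theorem therefore yields a unique probability measure $P_L$ on $\mathcal{B}(\Omega)$ with $L[X] = E_{P_L}[X]$ for $X \in \cH$. Setting $\cP = \{P_L : L \in \mathcal{L}\}$ gives the representation~\eqref{eq:representation}, and convexity of $\cP$ is inherited from $\mathcal{L}$.

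The main obstacle is weak compactness of $\cP$, and the key ingredient is tightness. Since $\Omega$ is Polish, I would first try to show that for every $\varepsilon>0$ there is a compact $K$ with $\sup_{P\in\cP} P(K^c) < \varepsilon$. Fix a countable dense set $\{x_j\}$. For each $\delta = 1/k$, consider the bounded Lipschitz functions $\phi_{N}(x) = \min\{1, \operatorname{dist}(x, \bigcup_{j\le N} B(x_j,\delta))/\delta\}$, which decrease to $0$ as $N\to\infty$. By regularity, $\Ehat[\phi_N] \downarrow 0$. I then choose $N_k$ with $\Ehat[\phi_{N_k}] < \varepsilon 2^{-k}$, so that $\sup_P P\bigl((\bigcup_{j\le N_k} \bar B(x_j,2/k))^c\bigr) < \varepsilon 2^{-k}$. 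The intersection over $k$ of these finite unions is closed and totally bounded, hence compact, and this gives uniform tightness. Prokhorov's theorem then makes $\cP$ relatively weakly compact.

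To see that $\cP$ is closed, let $P_n \to P$ weakly with $P_n \in \cP$. Then for every $X \in \cH$ we have $E_P[X] = \lim E_{P_n}[X] \le \Ehat[X]$, so the linear functional $X\mapsto E_P[X]$ lies in $\mathcal{L}$ and $P \in \cP$. Finally, the upper capacity $\Vhat(A) := \sup_{P\in\cP}P(A)$ is well defined as a definition once $\cP$ exists. The only delicate step is the tightness argument, where regularity of $\Ehat$ on $\cH$ substitutes for countable additivity uniformly over $\cP$.
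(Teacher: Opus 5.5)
Your argument is correct and is essentially the standard proof from the cited Denis--Hu--Peng paper, which the paper invokes without reproducing. That proof also runs Hahn--Banach for the dominated linear functionals, Daniell--Stone via regularity to turn them into probability measures, uniform tightness from regularity applied to bump functions built on a dense sequence, and then Prokhorov plus closedness. The only loose end is your opening remark about extending $\Ehat$ to $C_b(\Omega)$, which is never used and can simply be dropped.
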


The space $L^p(\Omega)$ is the completion of $\cH$ under
$\|X\|_p := (\Ehat[|X|^p])^{1/p}$. H\"older's inequality gives
$L^p(\Omega) \subset L^1(\Omega)$ for $p \geq 1$. The
representation~\eqref{eq:representation} extends to~$L^1(\Omega)$.

\begin{definition}[Independence {\cite{Peng2019}}]
\label{def:independence}
A sequence $\{X_i\}_{i=1}^n \subset L^1(\Omega;\R^d)$ is
\emph{independent} under $\Ehat$ if for each $1 \leq i \leq n-1$ and
every $\psi \in C_{b.\mathrm{Lip}}(\R^{d(i+1)})$,
\[
\Ehat[\psi(X_1,\ldots,X_i,X_{i+1})]
  = \Ehat\!\left[\Ehat[\psi(x_1,\ldots,x_i,X_{i+1})]
    \big|_{(x_1,\ldots,x_i) = (X_1,\ldots,X_i)}\right].
\]
\end{definition}

\begin{proposition}[{\cite[Proposition~2.1]{HuLiLi2021}}]
\label{prop:conditional-bound}
Let $\{X_i\}_{i=1}^n$ be independent in $L^2(\Omega;\R^d)$ under
$\Ehat$. Then for each $P \in \cP$ and
$\varphi \in C_{\mathrm{Lip}}(\R^d)$,
\[
E_P[\varphi(X_i) \mid \cF_{i-1}] \leq \Ehat[\varphi(X_i)],
  \quad P\text{-a.s., for } i \leq n,
\]
where $\cF_i = \sigma(X_1,\ldots,X_i)$ and
$\cF_0 = \{\emptyset, \Omega\}$.
\end{proposition}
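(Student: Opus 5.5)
The plan is to pass to a single measure $P\in\cP$ via the representation and then argue through a regular conditional distribution. Fix $P \in \cP$ and $i \le n$, and let $\mu_i(\omega,\cdot)$ be a regular conditional distribution of $X_i$ given $\cF_{i-1}$ under $P$. This exists because $X_i$ is $\R^d$-valued. For bounded Lipschitz $\varphi$, consider the random field $\Phi(x_1,\ldots,x_{i-1}) := \varphi$ composed appropriately. Independence (Definition~\ref{def:independence}) enters as follows: for every $A \in \cF_{i-1}$ of the form $\{(X_1,\ldots,X_{i-1}) \in B\}$ we want
\[
E_P[\varphi(X_i)\mathbf 1_A] \le \Ehat[\varphi(X_i)]\,P(A).
\]
Since $\mathbf 1_B$ is not Lipschitz, I would replace it by a nonnegative $g \in C_{b.\mathrm{Lip}}(\R^{d(i-1)})$ and prove
\[
E_P\bigl[g(X_1,\ldots,X_{i-1})\bigl(\varphi(X_i) - c\bigr)\bigr] \le 0, \qquad c := \Ehat[\varphi(X_i)].
\]
To get this, apply Definition~\ref{def:independence} to $\psi(x_1,\ldots,x_i) = g(x_1,\ldots,x_{i-1})(\varphi(x_i) - c)$. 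Its inner sublinear expectation is $\Ehat[g(x)(\varphi(X_i)-c)] = g(x)\,\Ehat[\varphi(X_i)-c] = 0$, using positive homogeneity ($g \ge 0$) and constant translation. Hence $\Ehat[\psi(X_1,\ldots,X_i)] = \Ehat[0] = 0$, and by the representation~\eqref{eq:representation} $E_P[\psi] \le 0$.

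Next I pass from Lipschitz $g$ to indicators. The class of bounded nonnegative $g$ with $E_P[g(X_{1:i-1})(\varphi(X_i)-c)] \le 0$ is closed under bounded pointwise monotone limits by dominated convergence, since $\varphi(X_i)$ is $P$-integrable. Every indicator of a closed set is a decreasing limit of bounded Lipschitz functions, and a monotone class / Dynkin argument then extends the inequality to all Borel $B$. This gives $E_P[(\varphi(X_i)-c)\mathbf 1_A] \le 0$ for all $A\in\cF_{i-1}$. Taking $A = \{E_P[\varphi(X_i)\mid\cF_{i-1}] > c\}$ yields the claim $P$-a.s.

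It remains to treat $\varphi \in C_{\mathrm{Lip}}(\R^d)$ that is unbounded, for instance quadratic growth truncated by the $L^2$ assumption. Here I would truncate: set $\varphi_N = (\varphi \wedge N)\vee(-N)$, which is bounded Lipschitz, so the bounded case gives $E_P[\varphi_N(X_i)\mid\cF_{i-1}] \le \Ehat[\varphi_N(X_i)]$. Let $N\to\infty$. On the left, conditional dominated convergence applies since $|\varphi(X_i)| \le C(1+|X_i|) \in L^1(P)$. On the right, $|\Ehat[\varphi_N(X_i)] - \Ehat[\varphi(X_i)]| \le \Ehat[|\varphi(X_i)|\mathbf 1_{\{|\varphi(X_i)|>N\}}]$ by subadditivity, and this tends to $0$ because $X_i \in L^2$, or by the regularity of $\Ehat$ on $L^1$ elements, which are uniformly integrable.

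The main obstacle is that independence is only available for $X_j$ in $L^1$ rather than $\cH$, so Definition~\ref{def:independence} must be applied to $\psi(X_1,\ldots,X_i)$ as an element of $L^1$. I would handle this by the standard convention that such compositions lie in $L^1$ and that the representation extends, as stated after Theorem~\ref{thm:representation}.
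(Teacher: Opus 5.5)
The paper gives no proof of this proposition; it quotes it from Hu--Li--Li. Your argument is the standard proof and is correct. You test against $\psi = g(x_1,\ldots,x_{i-1})(\varphi(x_i)-c)$ with $g \ge 0$ bounded Lipschitz and $c = \Ehat[\varphi(X_i)]$. Independence, positive homogeneity and constant translation then collapse the inner sublinear expectation to $\Ehat[0]=0$, and $E_P \le \Ehat$ gives $E_P[\psi(X_1,\ldots,X_i)] \le 0$. The truncation step for unbounded $\varphi$ is also fine. Two small points: functions in $C_{\mathrm{Lip}}(\R^d)$ grow at most linearly, so no ``quadratic growth'' arises; and the regular conditional distribution and ``random field'' you introduce are never used.

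The one justification to tighten is the passage from closed sets to all Borel sets. Set $\nu(B) := E_P[\mathbf 1_B(X_1,\ldots,X_{i-1})(\varphi(X_i)-c)]$. Dynkin's $\pi$--$\lambda$ theorem does not apply to $\{B : \nu(B) \le 0\}$, because that class is not closed under complements: $\nu(B^c) = \nu(\R^{d(i-1)}) - \nu(B)$ is a difference of two nonpositive numbers and has no sign.

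The monotone-class version does work, but you need to say why. Open sets are increasing unions of closed sets, so the monotone class generated by the closed sets is closed under complements and finite intersections. It is therefore the whole Borel $\sigma$-algebra. Since $\{B : \nu(B)\le 0\}$ contains the closed sets and is closed under monotone limits, it contains every Borel set.

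A quicker alternative uses regularity. Write $E_P[\varphi(X_i)-c \mid \cF_{i-1}] = H(X_1,\ldots,X_{i-1})$ and let $Q$ be the law of $(X_1,\ldots,X_{i-1})$ under $P$. Every closed $F \subseteq \{H>0\}$ satisfies $\int_F H\,dQ \le 0$, hence $Q(F)=0$. Inner regularity of $Q$ by closed sets then gives $Q(H>0)=0$ directly.
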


For each $i \leq n$, the support function
$g_i(p) := \Ehat[\ip{p}{X_i}]$ is sublinear on $\R^d$ and defines
the convex compact set
\begin{equation}\label{eq:Theta-i}
\Theta_i = \{\theta \in \R^d :
  \ip{\theta}{p} \leq g_i(p) \text{ for all } p \in \R^d\}.
\end{equation}
We recall two key results from Li and Hu~\cite{LiHu2024}.

\begin{theorem}[{\cite[Theorem~3.1]{LiHu2024}}]
\label{thm:LiHu-characterization}
Let $\{X_i\}_{i=1}^n$ be independent in $L^2(\Omega;\R^d)$ under
$\Ehat$. Then:
\begin{longlist}[(b)]
\item[\textup{(a)}]
$\Theta_i = \{E_P[X_i] : P \in \cP\}$ for each $i \leq n$.
\item[\textup{(b)}]
$E_P[X_i \mid \cF_{i-1}] \in \Theta_i$, $P$-a.s., for each
$P \in \cP$ and $i \leq n$.
\end{longlist}
\end{theorem}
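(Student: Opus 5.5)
Since the statement is quoted from Li and Hu, I will sketch how it can be derived from the representation theorem and Proposition~\ref{prop:conditional-bound}. The plan is to prove (b) first and then obtain (a) from it. Throughout I use the support-function description~\eqref{eq:Theta-i}: a point $\theta$ lies in the closed convex set $\Theta_i$ if and only if $\ip{\theta}{p} \leq g_i(p)$ for every $p$. It suffices to check this for $p$ in a countable dense subset of $\R^d$, because both sides are continuous in $p$; note that $g_i$ is Lipschitz, with constant $\Ehat[|X_i|]$.

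For (b), fix $P \in \cP$ and $p \in \R^d$, and apply Proposition~\ref{prop:conditional-bound} with $\varphi(x) = \ip{p}{x}$, which lies in $C_{\mathrm{Lip}}(\R^d)$. This gives
\[
\ip{p}{E_P[X_i \mid \cF_{i-1}]} = E_P[\ip{p}{X_i} \mid \cF_{i-1}] \leq g_i(p), \quad P\text{-a.s.}
\]
Taking the union of the exceptional null sets over a countable dense set of $p$ yields one $P$-null set off which the inequality holds for all such $p$. By continuity it then holds for all $p \in \R^d$, so $E_P[X_i\mid\cF_{i-1}] \in \Theta_i$ $P$-a.s. Here $X_i \in L^2 \subset L^1$ ensures that the conditional expectation is defined. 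The only points needing care are this null-set bookkeeping and the fact that Proposition~\ref{prop:conditional-bound} is stated for Lipschitz, rather than bounded Lipschitz, $\varphi$.

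For (a), the inclusion $\{E_P[X_i] : P \in \cP\} \subset \Theta_i$ follows either by taking expectations in (b), using that $\Theta_i$ is closed and convex, or directly from $\ip{p}{E_P[X_i]} = E_P[\ip{p}{X_i}] \leq \sup_Q E_Q[\ip{p}{X_i}] = g_i(p)$.

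For the reverse inclusion, set $K = \{E_P[X_i] : P\in\cP\}$. This set is convex, because $\cP$ is convex and $P\mapsto E_P[X_i]$ is affine. It is also compact: $\cP$ is weakly compact, and $P\mapsto E_P[X_i]$ is weakly continuous, first for $X_i \in \cH$ and then for $X_i \in L^1$ by approximation, since the uniform integrability implicit in the $L^1$ completion ($\sup_P E_P[|X - X^{(k)}|]\to 0$) makes the limit map continuous. The support function of $K$ is $\sup_P E_P[\ip{p}{X_i}] = g_i(p)$ by the representation~\eqref{eq:representation}. Suppose $\theta \in \Theta_i \setminus K$. Hahn--Banach separation would produce a $p$ with $\ip{\theta}{p} > \sup_{k\in K}\ip{k}{p} = g_i(p)$, contradicting $\theta\in\Theta_i$. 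Hence $\Theta_i = K$.

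The main obstacle I expect is the compactness, or at least the closedness, of $K$, since separation needs $K$ to be closed. If weak continuity of $P\mapsto E_P[X_i]$ on $L^1$ proves delicate, I would instead show that the supremum in $g_i(p)$ is attained for each $p$. Attainment would mean every exposed point of $\Theta_i$ lies in $K$, and convexity of $K$ together with Straszewicz's theorem and a closure argument would then finish the proof.
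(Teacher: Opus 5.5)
The paper gives no proof of this statement. It is quoted from Li and Hu, so there is nothing internal to compare against, and your argument has to stand on its own. It does: it is correct and self-contained relative to Theorem~\ref{thm:representation} (with its extension to $L^1(\Omega)$) and Proposition~\ref{prop:conditional-bound}.

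In (b), the following steps give $E_P[X_i\mid\cF_{i-1}]\in\Theta_i$ off a single $P$-null set:
the linear test function $\varphi(x)=\ip{p}{x}$;
linearity of conditional expectation;
a countable dense set of directions;
continuity in $p$ of both sides, where the right side is $\Ehat[|X_i|]$-Lipschitz by subadditivity.

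In (a), your main argument already settles the compactness you were worried about. The bound $\sup_{P\in\cP}|E_P[X_i]-E_P[X_i^{(k)}]|\le\Ehat[|X_i-X_i^{(k)}|]\to0$ shows that $P\mapsto E_P[X_i]$ is a uniform limit of weakly continuous maps on the weakly compact set $\cP$. Hence $K$ is compact and convex with support function $g_i$, and separation gives $\Theta_i=K$. The Straszewicz fallback is unnecessary. A useful by-product of your route is that (a) needs only the representation, not independence; independence enters solely through (b).

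The one point you flagged but did not resolve is worth closing explicitly, because the paper is inconsistent about it. Proposition~\ref{prop:conditional-bound} is stated for $\varphi\in C_{\mathrm{Lip}}(\R^d)$, while Remark~\ref{rem:truncation} treats it as requiring bounded Lipschitz test functions. Under the bounded reading, apply it to $\varphi_N(x)=(\ip{p}{x}\wedge N)\vee(-N)$ and let $N\to\infty$ along the integers.
\begin{itemize}
\item The left side converges $P$-a.s.\ to $E_P[\ip{p}{X_i}\mid\cF_{i-1}]$ by conditional dominated convergence, with dominating function $|p|\,|X_i|$.
\item The right side converges to $g_i(p)$, because $|\Ehat[\varphi_N(X_i)]-g_i(p)|\le\Ehat[(|p|\,|X_i|-N)^+]$.
\item Finally, $\Ehat[(|Z|-N)^+]\to0$ for every $Z\in L^1(\Omega)$. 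If $Y\in\cH$ with $|Y|\le c$, then $(|Z|-N)^+\le|Z-Y|$ for $N\ge c$. So $\limsup_N\Ehat[(|Z|-N)^+]\le\Ehat[|Z-Y|]$, which is arbitrarily small by density of $\cH$.
\end{itemize}
With this addition, your proof of (b) is valid under either reading of the proposition.
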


We define the Minkowski average, the distance function and the
variance parameter:
\begin{equation}\label{eq:Theta-rho-sigma}
\Theta = \left\{\frac{1}{n}\sum_{i=1}^n \theta_i :
  \theta_i \in \Theta_i\right\},
\quad
\rho_\Theta(x) = \inf_{\theta \in \Theta} |x - \theta|,
\quad
\bar\sigma_n^2 = \sup_{i \leq n}
  \inf_{\theta_i \in \Theta_i} \Ehat[|X_i - \theta_i|^2].
\end{equation}


\section{Martingale reduction}\label{sec:reduction}

We begin with a conditional domination principle that makes
the squared-deviation function compatible with the sublinear calculus.

\begin{lemma}[Conditional domination]\label{lem:conditional-domination}
Let $\{X_i\}_{i=1}^n$ be independent under $\Ehat$ with
$|X_i| \leq M$ a.s. Then for each $P \in \cP$, each $i \leq n$,
and each $\theta_i \in \Theta_i$,
\[
E_P\bigl[|X_i - \theta_i|^2 \mid \cF_{i-1}\bigr]
  \leq \Ehat\bigl[|X_i - \theta_i|^2\bigr],
  \quad P\text{-a.s.}
\]
\end{lemma}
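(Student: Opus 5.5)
The natural route is to apply Proposition~\ref{prop:conditional-bound} with the test function $\varphi(x) = |x - \theta_i|^2$. The obstruction is that this $\varphi$ is not Lipschitz on all of $\R^d$; it is only locally Lipschitz with quadratic growth. The bound $|X_i| \leq M$ removes the problem. Fix $\theta_i \in \Theta_i$ and note that $|\theta_i| \leq M$, since $\Theta_i = \{E_P[X_i] : P \in \cP\}$ by Theorem~\ref{thm:LiHu-characterization}(a) and $|E_P[X_i]| \leq E_P|X_i| \leq M$. Let $\pi$ be the metric projection of $\R^d$ onto the closed ball $\bar B(0,M)$, which is $1$-Lipschitz, and define
\[
\varphi(x) := |\pi(x) - \theta_i|^2 .
\]
On $\bar B(0,M)$ the map $y \mapsto |y-\theta_i|^2$ has gradient of norm at most $2(|y| + |\theta_i|) \leq 4M$. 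Hence $\varphi$ is bounded by $4M^2$ and globally Lipschitz with constant $4M$, so $\varphi \in C_{b.\mathrm{Lip}}(\R^d) \subset C_{\mathrm{Lip}}(\R^d)$.

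Because $|X_i| \leq M$ almost surely, we have $\pi(X_i) = X_i$ and therefore $\varphi(X_i) = |X_i - \theta_i|^2$, $P$-a.s. for every $P \in \cP$. Proposition~\ref{prop:conditional-bound} then gives
\[
E_P\bigl[|X_i-\theta_i|^2 \mid \cF_{i-1}\bigr] = E_P[\varphi(X_i)\mid\cF_{i-1}] \leq \Ehat[\varphi(X_i)] \quad P\text{-a.s.}
\]
To finish, we need $\Ehat[\varphi(X_i)] = \Ehat[|X_i-\theta_i|^2]$. This holds because the representation~\eqref{eq:representation} extends to $L^1(\Omega)$, so
\[
\Ehat[\varphi(X_i)] = \sup_{P} E_P[\varphi(X_i)] = \sup_P E_P[|X_i-\theta_i|^2] = \Ehat[|X_i-\theta_i|^2],
\]
where the middle equality uses that the two integrands agree $P$-a.s. for each $P$.

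The main delicate point is interpreting ``$|X_i|\leq M$ a.s.'' correctly. It should mean quasi-surely, that is, $\Vhat(|X_i|>M)=0$, so that the identification $\varphi(X_i) = |X_i-\theta_i|^2$ holds under every $P \in \cP$ simultaneously. I would state this interpretation explicitly.

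A second, minor check is that Proposition~\ref{prop:conditional-bound} requires independence in $L^2$. This is automatic here, since bounded random vectors lie in $L^2(\Omega;\R^d)$. If the hypothesis $|\theta_i| \leq M$ were unavailable, the same argument still works with Lipschitz constant $2(M+|\theta_i|)$, so the bound on $\theta_i$ is a convenience rather than a necessity.
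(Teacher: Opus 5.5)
Your proof is correct and follows essentially the paper's route: both replace $|x-\theta_i|^2$ by a bounded function with Lipschitz constant at most $2(M+|\theta_i|)\le 4M$ that agrees with it on $\{|x|\le M\}$, and then invoke Proposition~\ref{prop:conditional-bound}; the only difference is that the paper caps the values, $\min\bigl(|x-\theta_i|^2,(M+|\theta_i|)^2\bigr)$, whereas you precompose with the projection onto $\bar B(0,M)$. Your explicit check that $\Ehat[\varphi(X_i)]=\Ehat[|X_i-\theta_i|^2]$ under the quasi-sure reading fills in a step the paper leaves implicit, and your direct estimate $|\pi(x)-\theta_i|\le M+|\theta_i|$ avoids the paper's reversed inequality $(M+|\theta_i|)^2\ge(2M)^2$ (the fact actually needed is simply $|X_i-\theta_i|\le M+|\theta_i|$).
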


\begin{proof}
Since $\theta_i \in \Theta_i = \{E_P[X_i] : P \in \cP\}$ and
$|X_i| \leq M$ a.s., Jensen's inequality gives
$|\theta_i| = |E_P[X_i]| \leq E_P[|X_i|] \leq M$ for every
$P \in \cP$. In particular, $|X_i - \theta_i| \leq 2M$ on the
effective support $\{|x| \leq M\}$.

Define the truncation
\[
\tilde\varphi(x) = \min\bigl(|x - \theta_i|^2,\,
  (M + |\theta_i|)^2\bigr).
\]
Since $|\theta_i| \leq M$, the cutoff satisfies
$(M + |\theta_i|)^2 \geq (2M)^2 \geq |X_i - \theta_i|^2$ a.s., so
the truncation is inactive:
$\tilde\varphi(X_i) = |X_i - \theta_i|^2$ a.s.
Moreover, $\tilde\varphi$ is bounded and Lipschitz on $\R^d$ with
constant at most $2(M + |\theta_i|) \leq 4M$.
Proposition~\ref{prop:conditional-bound} applied to
$\tilde\varphi$ gives the claim.
\end{proof}

\begin{remark}[Role of the truncation]\label{rem:truncation}
Proposition~\ref{prop:conditional-bound} requires a bounded Lipschitz
test function, but the map $x \mapsto |x - \theta_i|^2$ is neither
bounded nor Lipschitz on all of $\R^d$. The truncation
$\tilde\varphi$ resolves both issues simultaneously. On the effective
support $\{|x| \leq M\}$ it agrees with the squared deviation, while
globally it has bounded gradient. This device is necessary even though
$|X_i| \leq M$, because the conditional expectation in
Proposition~\ref{prop:conditional-bound} is stated for all of~$\R^d$,
not merely on the support.
\end{remark}

The next lemma converts the sublinear problem into a classical one.
For every prior $P \in \cP$, the centred residuals form a martingale
difference sequence whose conditional variance is controlled uniformly.

\begin{lemma}[Martingale reduction]\label{lem:reduction}
Under the assumptions of
Lemma~\ref{lem:conditional-domination}, fix $P \in \cP$ and define
\[
Y_i := X_i - E_P[X_i \mid \cF_{i-1}], \quad i = 1,\ldots,n.
\]
Then the following hold $P$-a.s.:
\begin{longlist}[(d)]
\item[\textup{(a)}]
$\{Y_i\}_{i=1}^n$ is a martingale difference sequence with respect
to $(\cF_i)_{i=0}^n$ under~$P$.
\item[\textup{(b)}]
$\displaystyle\rho_\Theta\!\left(\frac{1}{n}\sum_{i=1}^n
  X_i\right)
  \leq \left|\frac{1}{n}\sum_{i=1}^n Y_i\right|$.
\item[\textup{(c)}]
$|Y_i| \leq 2M$.
\item[\textup{(d)}]
$E_P[|Y_i|^2 \mid \cF_{i-1}] \leq \bar\sigma_n^2$.
\end{longlist}
\end{lemma}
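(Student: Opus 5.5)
The plan is to verify the four items separately. Each one reduces to an earlier result in this paper plus an elementary property of conditional expectations under the single measure $P$. Throughout, write $m_i := E_P[X_i \mid \cF_{i-1}]$, so that $Y_i = X_i - m_i$. Note that $m_i$ is $\cF_{i-1}$-measurable and is well defined because $|X_i| \leq M$.

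For (a), $Y_i$ is $\cF_i$-measurable, since $X_i$ is $\cF_i$-measurable and $m_i$ is $\cF_{i-1}\subset\cF_i$-measurable. It is bounded, hence integrable. Moreover $E_P[Y_i \mid \cF_{i-1}] = m_i - m_i = 0$ $P$-a.s. by the tower property. Hence $\{Y_i\}$ is a martingale difference sequence.

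For (c), conditional Jensen gives $|m_i| \leq E_P[|X_i| \mid \cF_{i-1}] \leq M$ a.s., so $|Y_i| \leq |X_i| + |m_i| \leq 2M$.

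For (b), I invoke Theorem~\ref{thm:LiHu-characterization}(b): $m_i \in \Theta_i$ $P$-a.s. for each $i$. Intersecting these $n$ full-measure events, the point $\frac1n\sum_i m_i$ lies in the Minkowski average $\Theta$ a.s. Hence
\[
\rho_\Theta\Bigl(\frac1n\sum_{i=1}^n X_i\Bigr) \leq \Bigl|\frac1n\sum_{i=1}^n X_i - \frac1n\sum_{i=1}^n m_i\Bigr| = \Bigl|\frac1n\sum_{i=1}^n Y_i\Bigr|.
\]

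Item (d) is the only step needing care. First, for any deterministic $\theta \in \R^d$, the conditional bias--variance identity gives
\[
E_P[|X_i-\theta|^2 \mid \cF_{i-1}] = E_P[|X_i - m_i|^2 \mid \cF_{i-1}] + |m_i - \theta|^2 .
\]
The cross term vanishes because $m_i-\theta$ is $\cF_{i-1}$-measurable. Consequently $E_P[|Y_i|^2\mid\cF_{i-1}] \leq E_P[|X_i-\theta|^2\mid\cF_{i-1}]$ a.s. Restricting to $\theta\in\Theta_i$ and applying Lemma~\ref{lem:conditional-domination} yields
\[
E_P[|Y_i|^2\mid\cF_{i-1}] \leq \Ehat[|X_i-\theta|^2] \quad \text{a.s., for each } \theta\in\Theta_i .
\]

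The obstacle is passing to the infimum over $\theta$, since the exceptional null set depends on $\theta$. I handle this by taking a countable dense subset $D\subset\Theta_i$ and intersecting the corresponding countably many full-measure events. The map $\theta\mapsto\Ehat[|X_i-\theta|^2]$ is continuous on the compact set $\Theta_i$: it is Lipschitz there with constant $4M$, by subadditivity and $|X_i|,|\theta|\leq M$. So the infimum over $D$ equals the infimum over $\Theta_i$. This gives
\[
E_P[|Y_i|^2\mid\cF_{i-1}] \leq \inf_{\theta_i\in\Theta_i}\Ehat[|X_i-\theta_i|^2] \leq \bar\sigma_n^2 \quad P\text{-a.s.},
\]
which completes (d).
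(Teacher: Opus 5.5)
Your proof is correct and follows the paper's argument item by item: (a) by construction, (b) via Theorem~\ref{thm:LiHu-characterization}(b), (c) via conditional Jensen, and (d) via the $L^2$-optimality of the conditional mean combined with Lemma~\ref{lem:conditional-domination}. The only addition is that you explicitly handle the $\theta$-dependent null sets when taking the infimum in (d), which the paper passes over; your dense-subset and continuity argument is valid, but since $\Ehat[|X_i-\theta|^2]$ is deterministic, intersecting the null sets along a minimizing sequence $\theta^{(k)}\in\Theta_i$ would already suffice without any continuity.
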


\begin{proof}
\textup{(a)} By construction,
$E_P[Y_i \mid \cF_{i-1}] = 0$.

\textup{(b)} By
Theorem~\ref{thm:LiHu-characterization}(b),
$E_P[X_i \mid \cF_{i-1}] \in \Theta_i$ $P$-a.s. Setting
$\theta_i = E_P[X_i \mid \cF_{i-1}]$ gives
$\frac{1}{n}\sum_{i=1}^n \theta_i \in \Theta$, so
\[
\rho_\Theta\!\left(\frac{1}{n}\sum_{i=1}^n X_i\right)
  \leq \left|\frac{1}{n}\sum_{i=1}^n X_i
    - \frac{1}{n}\sum_{i=1}^n \theta_i\right|
  = \left|\frac{1}{n}\sum_{i=1}^n Y_i\right|.
\]

\textup{(c)}
$|Y_i| \leq |X_i| + |E_P[X_i \mid \cF_{i-1}]|
  \leq M + M = 2M$,
since $|E_P[X_i \mid \cF_{i-1}]| \leq E_P[|X_i| \mid \cF_{i-1}]
\leq M$ by Jensen's inequality.

\textup{(d)} The conditional mean minimises the conditional $L^2$
distance, so for any $\theta_i \in \R^d$,
\[
E_P[|Y_i|^2 \mid \cF_{i-1}]
  \leq E_P[|X_i - \theta_i|^2 \mid \cF_{i-1}].
\]
Restricting to $\theta_i \in \Theta_i$ and applying
Lemma~\ref{lem:conditional-domination},
\[
E_P[|Y_i|^2 \mid \cF_{i-1}]
  \leq \Ehat[|X_i - \theta_i|^2].
\]
Taking the infimum over $\theta_i \in \Theta_i$ and then the
supremum over $i$ yields
$E_P[|Y_i|^2 \mid \cF_{i-1}] \leq \bar\sigma_n^2$.
\end{proof}

It remains to pass from scalar martingale tail bounds to vector-valued
concentration. The following covering lemma is used repeatedly below.

\begin{lemma}[Covering transfer]\label{lem:covering}
Let $Y_1,\ldots,Y_n$ be $\R^d$-valued random vectors under a
probability measure~$P$. Suppose there exists a decreasing function
$\Psi : (0,\infty) \to [0,1]$ such that for every unit vector
$p \in S^{d-1}$,
\[
P\!\left(\sum_{i=1}^n \ip{p}{Y_i} \geq s\right)
  \leq \Psi(s), \quad s > 0.
\]
Then for every $t > 0$,
\[
P\!\left(\left|\frac{1}{n}\sum_{i=1}^n Y_i\right| > t\right)
  \leq 2 \cdot 5^d \; \Psi\!\bigl(\tfrac{nt}{2}\bigr).
\]
\end{lemma}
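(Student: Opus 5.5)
The plan is the standard $\varepsilon$-net argument on the sphere with $\varepsilon = 1/2$, followed by a union bound over the net.

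\textbf{Step 1: a $\tfrac12$-net.} Fix a $\tfrac12$-net $\cN \subset S^{d-1}$, meaning every $u \in S^{d-1}$ satisfies $|u-p| \le \tfrac12$ for some $p \in \cN$. Take $\cN$ to be a maximal $\tfrac12$-separated subset of $S^{d-1}$. The balls of radius $\tfrac14$ around its points are disjoint and lie inside the ball of radius $\tfrac54$. Comparing volumes gives
\[
|\cN| \le \left(\tfrac{5/4}{1/4}\right)^d = 5^d .
\]

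\textbf{Step 2: norm controlled by the net.} We show that for every $v \in \R^d$,
\[
|v| \le 2\max_{p\in\cN}\ip{p}{v}.
\]
The case $v = 0$ is trivial. Otherwise set $u = v/|v|$ and choose $p \in \cN$ with $|u-p| \le \tfrac12$. By Cauchy--Schwarz,
\[
\ip{p}{v} = |v| - \ip{u-p}{v} \ge |v| - \tfrac12|v| = \tfrac12|v| .
\]

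\textbf{Step 3: union bound.} Put $S = \sum_{i=1}^n Y_i$ and apply Step 2 with $v = S$. On the event $\{|S/n| > t\}$ we have $|S| > nt$, so some $p \in \cN$ satisfies
\[
\ip{p}{S} = \sum_{i=1}^n \ip{p}{Y_i} > \tfrac{nt}{2}.
\]
Hence
\[
P\bigl(|S/n|>t\bigr) \le \sum_{p\in\cN} P\!\left(\sum_{i=1}^n\ip{p}{Y_i} \ge \tfrac{nt}{2}\right) \le 5^d\,\Psi\!\bigl(\tfrac{nt}{2}\bigr),
\]
using the hypothesis with $s = nt/2 > 0$ for each unit vector $p \in \cN$. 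The strict inequality on the event is compatible with the non-strict one in the hypothesis, since $\{\,\cdot > s\} \subset \{\,\cdot \ge s\}$.

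This bound is already stronger than claimed. The stated factor $2\cdot 5^d$ follows at once because $\Psi \ge 0$. Alternatively, the factor $2$ can be seen as coming from a two-sided version that uses both $\pm p$; it is not needed here.

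\textbf{Main difficulty.} The only step needing care is the volumetric count $|\cN| \le 5^d$ in Step 1. It is classical (see, e.g., Vershynin~\cite{Vershynin2018}), and I would cite it rather than reprove it.
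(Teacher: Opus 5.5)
Your proof is correct and follows the paper's argument: a $\tfrac12$-net of $S^{d-1}$ with at most $5^d$ points, the estimate $\ip{p}{S_n} \geq \tfrac12 |S_n|$ for the net point $p$ nearest to $S_n/|S_n|$, and a union bound. You are also right that the paper's union over $\pm p$ is redundant, because $u = S_n/|S_n|$ already gives $\ip{u}{S_n} = |S_n| \geq 0$; this yields $5^d\,\Psi(nt/2)$, and the stated factor $2$ is harmless slack.
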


\begin{proof}
Let $\cN$ be a $\frac{1}{2}$-net of $S^{d-1}$. By a standard
volumetric argument
\cite[Corollary~4.2.13]{Vershynin2018}, we may choose
$|\cN| \leq 5^d$. Set $S_n = \sum_{i=1}^n Y_i$. If
$|S_n| > nt$, there exists $u \in S^{d-1}$ with
$\ip{u}{S_n} = |S_n| > nt$. Since $\cN$ is a $\frac{1}{2}$-net,
there exists $p \in \cN$ with $|p - u| \leq \frac{1}{2}$, whence
\[
\ip{p}{S_n}
  = \ip{u}{S_n} - \ip{u-p}{S_n}
  \geq |S_n| - \tfrac{1}{2}|S_n|
  = \tfrac{1}{2}|S_n|
  > \tfrac{nt}{2}.
\]
Therefore
$\{|S_n| > nt\}
  \subseteq \bigcup_{p \in \cN}
    \bigl(\{\ip{p}{S_n} > \tfrac{nt}{2}\}
      \cup \{\ip{-p}{S_n} > \tfrac{nt}{2}\}\bigr)$,
where the second event accounts for the case
$\ip{u}{S_n} < 0$ by symmetry. A union bound gives
$P(|S_n| > nt) \leq 2|\cN| \, \Psi(\tfrac{nt}{2})
  \leq 2 \cdot 5^d \, \Psi(\tfrac{nt}{2})$.
\end{proof}

\begin{remark}[Scalar specialisation]\label{rem:d=1}
When $d = 1$, the sphere $S^0 = \{-1,+1\}$ is an exact cover. The
union bound over these two points gives
$P(|\frac{1}{n}\sum Y_i| > t) \leq 2\,\Psi(nt)$ directly, with no
approximation loss, since the factor $5^d$ reduces to~$1$ and the halving
$t \to t/2$ is unnecessary.
\end{remark}


\section{Concentration via scalar tail bounds}\label{sec:concentration}

The martingale reduction and covering transfer of
Section~\ref{sec:reduction} reduce the problem to obtaining a scalar
tail bound $\Psi$ for one-dimensional projections of the martingale
differences. Different choices of $\Psi$ yield different
concentration inequalities. We formalise this observation as a
general principle.

\begin{theorem}[General concentration principle]
\label{thm:general}
Let $\{X_i\}_{i=1}^n$ be independent in $L^2(\Omega;\R^d)$ under
$\Ehat$ with $|X_i| \leq M$ a.s. Let $(Y_i)$ be the martingale
difference sequence from Lemma~\ref{lem:reduction}. Suppose there
exists a decreasing function $\Psi:(0,\infty) \to [0,1]$ such that,
for every $P \in \cP$ and every unit vector $p \in S^{d-1}$,
\[
P\!\left(\sum_{i=1}^n \ip{p}{Y_i} \geq s\right)
  \leq \Psi(s), \quad s > 0.
\]
Then for every $t > 0$,
\[
\Vhat\!\left(\rho_\Theta\!\left(\frac{1}{n}\sum_{i=1}^n X_i\right)
  > t\right)
  \leq 2 \cdot 5^d \; \Psi\!\bigl(\tfrac{nt}{2}\bigr).
\]
\end{theorem}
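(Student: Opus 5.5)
The proof is a direct concatenation of Lemma~\ref{lem:reduction} and Lemma~\ref{lem:covering}, carried out separately for each prior and then passed to the upper capacity by taking a supremum. I fix an arbitrary $P \in \cP$ and a threshold $t>0$, and I let $(Y_i)$ be the martingale differences of Lemma~\ref{lem:reduction} built from this $P$. I write $\Xbar = \frac1n\sum_{i=1}^n X_i$ for the sample mean.

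The first step is the event inclusion. Part~(b) of Lemma~\ref{lem:reduction} gives, $P$-almost surely,
\[
\rho_\Theta(\Xbar) \leq \Bigl|\frac1n\sum_{i=1}^n Y_i\Bigr| .
\]
Hence $\{\rho_\Theta(\Xbar) > t\} \subseteq \{|\frac1n\sum_i Y_i| > t\}$ up to a $P$-null set. This yields
\[
P\bigl(\rho_\Theta(\Xbar) > t\bigr) \leq P\Bigl(\Bigl|\frac1n\sum_{i=1}^n Y_i\Bigr| > t\Bigr).
\]
Measurability of the left-hand event is not an issue: $\rho_\Theta$ is $1$-Lipschitz, since $\Theta$ is a nonempty compact convex set as a Minkowski average of such sets, and so $\rho_\Theta(\Xbar)$ is a Borel random variable.

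The second step applies Lemma~\ref{lem:covering} under $P$. The hypothesis of the theorem supplies exactly the projected tail bound $P(\sum_i \ip{p}{Y_i} \geq s) \leq \Psi(s)$ for every $p \in S^{d-1}$ and every $s>0$, with $\Psi$ decreasing and independent of $p$. The lemma therefore gives
\[
P\Bigl(\Bigl|\frac1n\sum_{i=1}^n Y_i\Bigr| > t\Bigr) \leq 2\cdot 5^d\,\Psi\bigl(\tfrac{nt}{2}\bigr).
\]
Chaining this with the first step bounds $P(\rho_\Theta(\Xbar) > t)$ by a quantity that does not depend on $P$.

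The final step takes the supremum over $P\in\cP$ and uses $\Vhat(A)=\sup_{P\in\cP}P(A)$ to obtain the claim. The one delicate point is that $Y_i$ depends on $P$ through the conditional expectations $E_P[X_i\mid\cF_{i-1}]$. The argument still goes through because the hypothesis on $\Psi$ is required uniformly over all $P\in\cP$ with a common function $\Psi$, and this uniformity is exactly what allows the supremum. Beyond checking this point, no real obstacle is expected.
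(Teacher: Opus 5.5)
Your proposal is correct and follows the paper's proof exactly. You fix $P\in\cP$, use Lemma~\ref{lem:reduction}(b) to dominate $\rho_\Theta(\Xbar)$ by $|\frac1n\sum_i Y_i|$ $P$-a.s., apply Lemma~\ref{lem:covering} under $P$, and take the supremum over $\cP$. Your remarks on measurability and on the $P$-dependence of $Y_i$ (absorbed by the uniformity of $\Psi$) are accurate and make explicit what the paper leaves implicit.
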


\begin{proof}
Fix $P \in \cP$. By Lemma~\ref{lem:reduction}(b),
$\rho_\Theta(\Xbar) \leq |\frac{1}{n}\sum Y_i|$.
Lemma~\ref{lem:covering} applied to $(Y_i)$ under $P$ with the
given $\Psi$ yields
$P(|\frac{1}{n}\sum Y_i| > t) \leq 2 \cdot 5^d \, \Psi(nt/2)$.
Taking the supremum over $P \in \cP$ completes the proof.
\end{proof}

The Azuma--Hoeffding and Bernstein inequalities are now immediate
corollaries, obtained by plugging in the appropriate scalar tail
bound.

\begin{corollary}[Azuma--Hoeffding]\label{thm:azuma-cor}
Under the assumptions of Theorem~\ref{thm:general}, for every
$t > 0$,
\[
\Vhat\!\left(\rho_\Theta\!\left(\frac{1}{n}\sum_{i=1}^n X_i\right)
  > t\right)
  \leq 2 \cdot 5^d \,
  \exp\!\left(-\frac{nt^2}{32 M^2}\right).
\]
\end{corollary}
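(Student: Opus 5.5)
The plan is to apply Theorem~\ref{thm:general} with $\Psi$ supplied by the classical scalar Azuma--Hoeffding inequality for bounded martingale differences, and then check that the constants give the exponent $nt^2/(32M^2)$.

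Fix $P \in \cP$ and a unit vector $p \in S^{d-1}$. Set $Z_i := \ip{p}{Y_i}$, where $(Y_i)$ is the sequence from Lemma~\ref{lem:reduction}. Two properties of $(Z_i)$ follow at once from that lemma:
\begin{itemize}
\item By Lemma~\ref{lem:reduction}(a), $(Z_i)$ is a real martingale difference sequence with respect to $(\cF_i)$ under $P$, since $E_P[Z_i \mid \cF_{i-1}] = \ip{p}{E_P[Y_i \mid \cF_{i-1}]} = 0$.
\item By Lemma~\ref{lem:reduction}(c) and Cauchy--Schwarz, $|Z_i| \leq |p|\,|Y_i| \leq 2M$ $P$-a.s.
\end{itemize}
So each $Z_i$ lies in the interval $[-2M, 2M]$, whose width is $c_i = 4M$.

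Azuma--Hoeffding in its Hoeffding-lemma form handles such increments. Conditionally on $\cF_{i-1}$, a mean-zero variable confined to an interval of width $c_i$ satisfies
\[
E_P[e^{\lambda Z_i} \mid \cF_{i-1}] \leq \exp(\lambda^2 c_i^2/8).
\]
Iterating this via the tower property and then applying Chernoff's bound gives
\[
P\Bigl(\sum_{i=1}^n Z_i \geq s\Bigr) \leq \exp\!\Bigl(-\frac{2s^2}{\sum_i c_i^2}\Bigr) = \exp\!\Bigl(-\frac{s^2}{8nM^2}\Bigr).
\]
We therefore take $\Psi(s) := \exp(-s^2/(8nM^2))$. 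This $\Psi$ is decreasing on $(0,\infty)$ and takes values in $[0,1]$. It does not depend on $P$ or $p$, so the hypothesis of Theorem~\ref{thm:general} holds uniformly.

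Theorem~\ref{thm:general} then yields
\[
\Vhat\bigl(\rho_\Theta(\Xbar) > t\bigr) \leq 2\cdot 5^d\, \Psi(nt/2).
\]
Evaluating $\Psi$ at $s = nt/2$ gives
\[
\Psi(nt/2) = \exp\!\Bigl(-\frac{n^2t^2/4}{8nM^2}\Bigr) = \exp\!\Bigl(-\frac{nt^2}{32M^2}\Bigr),
\]
which is exactly the claimed bound.

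The only point needing care is the constant. The cruder form of Azuma, which uses $|Z_i| \leq c_i$ with $\exp(-s^2/(2\sum c_i^2))$ and $c_i = 2M$, produces the same exponent $s^2/(8nM^2)$. So either version works, provided the bound is applied with $|Z_i| \leq 2M$ from Lemma~\ref{lem:reduction}(c) rather than $M$. The remaining requirement is that the conditional Hoeffding lemma is invoked under the single measure $P$. This is legitimate because all sublinear structure has already been absorbed into Lemma~\ref{lem:reduction}.
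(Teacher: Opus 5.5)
Your proposal is correct and follows the paper's proof: you bound $\ip{p}{Y_i}$ by $2M$ using Lemma~\ref{lem:reduction}(c), obtain $\Psi(s)=\exp(-s^2/(8nM^2))$ from Azuma--Hoeffding, and evaluate Theorem~\ref{thm:general} at $s=nt/2$ to get the exponent $nt^2/(32M^2)$. The only difference is that you derive the Azuma bound via the conditional Hoeffding lemma and Chernoff's method instead of citing it, and your check that the width-$4M$ form and the $|Z_i|\le 2M$ form give the same constant is accurate.
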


\begin{proof}
The scalar increments $D_i = \ip{p}{Y_i}$ satisfy
$|D_i| \leq 2M$. The Azuma--Hoeffding
inequality~\cite[Corollary~2.20]{McDiarmid1998} gives
$\Psi(s) = \exp(-s^2/(8nM^2))$.
Theorem~\ref{thm:general} with $s = nt/2$ yields
$2 \cdot 5^d \exp(-(nt/2)^2/(8nM^2))
  = 2 \cdot 5^d \exp(-nt^2/(32M^2))$.
\end{proof}

\begin{corollary}[Bernstein]\label{thm:bernstein-cor}
Under the assumptions of Theorem~\ref{thm:general}, for every
$t > 0$,
\[
\Vhat\!\left(\rho_\Theta\!\left(\frac{1}{n}\sum_{i=1}^n X_i\right)
  > t\right)
  \leq 2 \cdot 5^d \,
  \exp\!\left(-\frac{nt^2}{8\bar\sigma_n^2
    + \frac{8Mt}{3}}\right).
\]
\end{corollary}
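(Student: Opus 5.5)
Following the pattern of Corollary~\ref{thm:azuma-cor}, I would supply a Bernstein-type scalar tail bound $\Psi$ and feed it into Theorem~\ref{thm:general}. Fix $P \in \cP$ and a unit vector $p \in S^{d-1}$, and set $D_i = \ip{p}{Y_i}$. By Lemma~\ref{lem:reduction}(a), $(D_i)$ is a real martingale difference sequence under $P$ with respect to $(\cF_i)$. By Lemma~\ref{lem:reduction}(c) and Cauchy--Schwarz, $|D_i| \leq |Y_i| \leq 2M$. For the conditional variance, $E_P[D_i^2 \mid \cF_{i-1}] \leq E_P[|Y_i|^2 \mid \cF_{i-1}] \leq \bar\sigma_n^2$ $P$-a.s. by Lemma~\ref{lem:reduction}(d). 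The predictable quadratic variation $\sum_{i\le n} E_P[D_i^2\mid\cF_{i-1}]$ is therefore bounded deterministically by $n\bar\sigma_n^2$.

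Next I would apply Freedman's inequality (the martingale Bernstein inequality, e.g.\ \cite[Theorem~3.15]{McDiarmid1998}) with increment bound $b = 2M$ and variance bound $V = n\bar\sigma_n^2$. This gives
\[
P\!\left(\sum_{i=1}^n D_i \geq s\right) \leq \exp\!\left(-\frac{s^2}{2\bigl(n\bar\sigma_n^2 + \tfrac{2Ms}{3}\bigr)}\right) =: \Psi(s).
\]
The function $\Psi$ is decreasing in $s$, takes values in $[0,1]$, and does not depend on $P$ or $p$. Hence the hypothesis of Theorem~\ref{thm:general} is satisfied.

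Finally, substituting $s = nt/2$ gives
\[
\frac{s^2}{2\bigl(n\bar\sigma_n^2 + \tfrac{2Ms}{3}\bigr)} = \frac{n^2t^2/4}{2n\bar\sigma_n^2 + \tfrac{2Mnt}{3}} = \frac{nt^2}{8\bar\sigma_n^2 + \tfrac{8Mt}{3}},
\]
which is exactly the stated exponent, with the prefactor $2\cdot 5^d$ coming from Theorem~\ref{thm:general}.

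The main point to check is that the version of Freedman's inequality invoked uses the one-sided bound $D_i \le b$ with $b = 2M$ and the $\tfrac{bs}{3}$ term in the denominator, so that the constants come out as $8Mt/3$. I expect no further obstacle, because the variance control is already uniform in $P$ thanks to Lemma~\ref{lem:conditional-domination}.
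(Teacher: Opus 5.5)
Your proposal is correct and matches the paper's proof. The paper likewise bounds $|D_i|\le 2M$ and the predictable quadratic variation by $n\bar\sigma_n^2$ via Lemma~\ref{lem:reduction}(d), applies Freedman's inequality with $b=2M$ and $V=n\bar\sigma_n^2$ to get $\Psi(s)=\exp\bigl(-s^2/(2n\bar\sigma_n^2+\tfrac{4Ms}{3})\bigr)$, and substitutes $s=nt/2$ into Theorem~\ref{thm:general}; citing McDiarmid's form of the martingale Bernstein inequality rather than Freedman's paper is immaterial.
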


\begin{proof}
The scalar increments $D_i = \ip{p}{Y_i}$ satisfy
$|D_i| \leq 2M$ and, by Lemma~\ref{lem:reduction}(d),
\[
\sum_{i=1}^n E_P[D_i^2 \mid \cF_{i-1}]
  \leq \sum_{i=1}^n E_P[|Y_i|^2 \mid \cF_{i-1}]
  \leq n\bar\sigma_n^2, \quad P\text{-a.s.}
\]
Freedman's inequality~\cite[Theorem~1.6]{Freedman1975} with bound
$b = 2M$ and predictable quadratic variation $V = n\bar\sigma_n^2$
gives
$\Psi(s) = \exp\bigl(-s^2/(2n\bar\sigma_n^2
  + \frac{4Ms}{3})\bigr)$.
Theorem~\ref{thm:general} with $s = nt/2$ yields, after
simplification,
\[
\frac{(nt/2)^2}{2n\bar\sigma_n^2 + \frac{2nMt}{3}}
  = \frac{nt^2}{8\bar\sigma_n^2 + \frac{8Mt}{3}}.
  \qedhere
\]
\end{proof}

\begin{remark}[Two regimes]\label{rem:two-regimes}
The Bernstein bound exhibits two regimes depending on the ratio
$t/(\bar\sigma_n^2/M)$.
\begin{longlist}[(ii)]
\item[\textup{(i)}]
\textit{Sub-Gaussian regime.} When $t \leq 3\bar\sigma_n^2/M$,
the term $8\bar\sigma_n^2$ dominates the denominator
$8\bar\sigma_n^2 + \frac{8Mt}{3}$, so
Corollary~\ref{thm:bernstein-cor} yields the bound
$2 \cdot 5^d \exp(-nt^2/(8\bar\sigma_n^2))$.
This is sharper than Corollary~\ref{thm:azuma-cor} precisely when
$\bar\sigma_n^2 < 4M^2$.
\item[\textup{(ii)}]
\textit{Sub-exponential regime.} When $t \geq 3\bar\sigma_n^2/M$,
the linear term $\frac{8Mt}{3}$ dominates the denominator, and
the bound becomes $2 \cdot 5^d \exp(-3nt/(8M))$, decaying
exponentially in $t$ rather than in $t^2$.
\end{longlist}
\end{remark}

\begin{corollary}[Recovery of the moment bound]
\label{cor:moment-recovery}
Under the assumptions of Theorem~\ref{thm:general},
\[
\Ehat\!\left[\rho_\Theta^2\!\left(\frac{1}{n}\sum_{i=1}^n
  X_i\right)\right]
  \leq \frac{16\bar\sigma_n^2}{n}
    (1 + d\log 5 + \log 2) + \frac{C_0 M^2}{n^2},
\]
where $C_0$ is a universal constant. In particular, this recovers
the $O(\bar\sigma_n^2/n)$ rate of~\cite{LiHu2024}.
\end{corollary}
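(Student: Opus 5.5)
The plan is to avoid integrating the tail bound of Corollary~\ref{thm:bernstein-cor} for the lower-order term, and instead get the whole estimate from the second-moment structure of the martingale reduction in Lemma~\ref{lem:reduction}. This produces $\bar\sigma_n^2/n$ directly. The claimed inequality then follows with any universal $C_0 \geq 0$ (in particular $C_0 = 0$), because $16(1 + d\log 5 + \log 2) \geq 1$.

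The steps are as follows.

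First fix $P \in \cP$ and let $(Y_i)$ be as in Lemma~\ref{lem:reduction}. Part (b) gives, $P$-a.s.,
\[
\rho_\Theta^2\!\left(\Xbar\right) \leq \left|\frac{1}{n}\sum_{i=1}^n Y_i\right|^2 .
\]
Next expand the square:
\[
E_P\Bigl|\sum_{i=1}^n Y_i\Bigr|^2 = \sum_{i=1}^n E_P|Y_i|^2 + 2\sum_{i<j} E_P\ip{Y_i}{Y_j}.
\]
The cross terms vanish. Indeed, $Y_i$ is $\cF_{i}$-measurable, hence $\cF_{j-1}$-measurable for $i<j$. It is bounded by part (c), so integrability is automatic. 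Part (a) then gives $E_P[\ip{Y_i}{Y_j}] = E_P[\ip{Y_i}{E_P[Y_j\mid\cF_{j-1}]}] = 0$.

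For the diagonal terms, the tower property and part (d) give $E_P|Y_i|^2 = E_P\bigl[E_P[|Y_i|^2\mid\cF_{i-1}]\bigr] \leq \bar\sigma_n^2$. Hence
\[
E_P\bigl[\rho_\Theta^2(\Xbar)\bigr] \leq \frac{1}{n^2}\cdot n\bar\sigma_n^2 = \frac{\bar\sigma_n^2}{n}.
\]
Since this holds for every $P\in\cP$, taking the supremum over $P$ and using the representation~\eqref{eq:representation} bounds $\Ehat[\rho_\Theta^2(\Xbar)]$ by the same quantity.

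The one point needing care is this last passage from $\sup_P E_P$ to $\Ehat$. The function $\rho_\Theta^2$ is $1$-Lipschitz-squared in the argument, and $\Xbar$ is bounded by $M$. So $\rho_\Theta^2(\Xbar)$ coincides with a bounded Lipschitz function of $(X_1,\dots,X_n)$, obtained by truncating as in Lemma~\ref{lem:conditional-domination}, and therefore lies in $L^1(\Omega)$. The representation extends to $L^1(\Omega)$, so the passage is justified.

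Finally, since $\bar\sigma_n^2/n \leq \frac{16\bar\sigma_n^2}{n}(1+d\log 5+\log 2) + \frac{C_0M^2}{n^2}$ for every $C_0\geq 0$, the statement follows with a universal constant independent of $d$, $M$ and $n$.

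I expect the main obstacle to be resisting the natural tail-integration route $\Ehat[\rho_\Theta^2]\le\sup_P\int_0^\infty 2t\,P(\rho_\Theta(\Xbar)>t)\,dt$ applied to Corollary~\ref{thm:bernstein-cor}. On that route, splitting $\exp(-a/(b+c))\le e^{-a/(2b)}+e^{-a/(2c)}$ does reproduce the leading term $\frac{16\bar\sigma_n^2}{n}(1+\log(2\cdot 5^d))$ exactly. However, the sub-exponential piece, after capping at $1$, contributes roughly $M^2(\log(2\cdot5^d))^2/n^2$, which carries a $d$-dependence incompatible with a universal $C_0$. The orthogonality argument above sidesteps this entirely. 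The tail computation can be mentioned only as an explanation of where the displayed leading constant comes from.
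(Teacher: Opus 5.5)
Your proof is correct, and it takes a genuinely different route from the paper's. The paper proves the corollary by integrating the Bernstein tail of Corollary~\ref{thm:bernstein-cor} with the layer-cake formula, splitting at $s_0=\bar\sigma_n^2$, so the moment bound is presented as a consequence of the exponential inequality. You never use the tail bound. Instead you use $L^2$-orthogonality of the martingale differences from Lemma~\ref{lem:reduction}, which is essentially Li and Hu's original second-moment argument.

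Your route gives a strictly stronger, dimension-free conclusion, $\Ehat[\rho_\Theta^2(\Xbar)]\le\bar\sigma_n^2/n$, so the statement holds with $C_0=0$. You could even bypass Lemma~\ref{lem:conditional-domination}: for deterministic $\theta$, the projection property and the representation give $E_P|Y_i|^2\le E_P|X_i-\theta|^2\le\Ehat[|X_i-\theta|^2]$.

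The paper's route is meant to show that the tail inequality by itself recovers the variance rate, and your argument does not show that. Your diagnosis of that route is nonetheless accurate, and it exposes weaknesses in the paper's sketch:
\begin{itemize}
\item Under a sublinear expectation the layer-cake identity holds only as an inequality, $\Ehat[Z]\le\int_0^\infty\Vhat(Z>s)\,ds$.
\item The bound $8\bar\sigma_n^2+\frac{8M\sqrt{s}}{3}\le16\bar\sigma_n^2$ for $s\le\bar\sigma_n^2$ needs $M\le3\bar\sigma_n$, which does not follow from $\bar\sigma_n\le2M$.
\item The factor $1+\log(2\cdot5^d)$ appears only after capping the tail bound at $1$.
\item The sub-exponential piece is of order $M^2(\log(2\cdot5^d))^2/n^2$.
\end{itemize}
Letting $\bar\sigma_n\to0$ with $M$ fixed shows that Corollary~\ref{thm:bernstein-cor} alone cannot yield a universal $C_0$. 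Your orthogonality argument is therefore the sound way to prove the statement as written.
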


\begin{proof}
Let $Z = \rho_\Theta^2(\Xbar)$. Write
$\alpha = 2 \cdot 5^d$ for the prefactor in
Corollary~\ref{thm:bernstein-cor}. The layer-cake formula gives
\[
\Ehat[Z]
  = \int_0^\infty
    \Vhat(\rho_\Theta(\Xbar) > \sqrt{s}) \, ds
  \leq \int_0^\infty \alpha \,
    \exp\!\left(-\frac{ns}{8\bar\sigma_n^2
      + \frac{8M\sqrt{s}}{3}}\right) ds.
\]
We split at $s_0 = \bar\sigma_n^2$. For $s \leq s_0$, the
denominator satisfies
$8\bar\sigma_n^2 + \frac{8M\sqrt{s}}{3}
  \leq 16\bar\sigma_n^2$
(since $\sqrt{s} \leq \bar\sigma_n \leq 2M$), so the integrand is
at most $\alpha \exp(-ns/(16\bar\sigma_n^2))$, which integrates to
$16\alpha\bar\sigma_n^2/n$. For $s > s_0$, the denominator exceeds
$\frac{8M\sqrt{s}}{3}$, and this tail contributes $O(M^2/n^2)$.
\end{proof}


\section{Dimension-free bound for identically distributed vectors}
\label{sec:dimfree}

The $5^d$ prefactor in Corollaries~\ref{thm:azuma-cor}
and~\ref{thm:bernstein-cor} arises from the covering argument and becomes
prohibitive in high dimensions. When the $X_i$ are identically
distributed, we can bypass this entirely.

The idea is to embed each vector-valued martingale increment $Y_i$
into matrix space via the rank-one map $Y_i \mapsto Y_i e_1^T$,
producing a $d \times 1$ matrix martingale. The operator norm of
$\sum Y_i e_1^T$ equals the Euclidean norm $|\sum Y_i|$, so
controlling the matrix martingale controls the original vector sum.
The matrix Freedman inequality~\cite{Tropp2011} then bounds all
directional projections simultaneously through a single spectral
inequality, replacing the exponential covering prefactor $5^d$ with
the dimensional parameter $d + 1$ at no cost to the exponent.

\begin{theorem}[Dimension-free concentration]\label{thm:dimfree}
Let $\{X_i\}_{i=1}^n$ be independent in $L^2(\Omega;\R^d)$ under
$\Ehat$ with $|X_i| \leq M$ a.s. Then for every $t > 0$,
\[
\Vhat\!\left(\rho_{\Theta}\!\left(\frac{1}{n}\sum_{i=1}^n
  X_i\right) > t\right)
  \leq (d+1) \,
  \exp\!\left(-\frac{nt^2}{2\bar\sigma_n^2
    + \frac{4Mt}{3}}\right).
\]
\end{theorem}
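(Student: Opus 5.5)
The plan is to fix $P \in \cP$ and reduce, exactly as in Theorem~\ref{thm:general}, to the martingale difference sequence $(Y_i)$ of Lemma~\ref{lem:reduction}. By Lemma~\ref{lem:reduction}(b), $\rho_\Theta(\Xbar) \leq |\frac1n \sum_{i=1}^n Y_i|$ $P$-a.s. It therefore suffices to show
\[
P\!\left(\left|\sum_{i=1}^n Y_i\right| \geq nt\right)
  \leq (d+1)\exp\!\left(-\frac{nt^2}{2\bar\sigma_n^2 + \frac{4Mt}{3}}\right)
\]
uniformly in $P$, and then take $\sup_{P\in\cP}$. Instead of projecting onto directions and covering $S^{d-1}$ (Lemma~\ref{lem:covering}), I will treat $Z_i := Y_i e_1^T$ as a $d\times 1$ matrix martingale difference sequence. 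Then $\|\sum_i Z_i\|_{\op} = |\sum_i Y_i|$, so a single tail bound on the operator norm controls the Euclidean norm.

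Next I invoke the rectangular form of the matrix Freedman inequality \cite{Tropp2011}. For a $d_1\times d_2$ martingale difference sequence $(Z_i)$ with $\|Z_i\|_{\op}\leq R$, it bounds the event
\[
\Bigl\{\Bigl\|\sum_{i\le k} Z_i\Bigr\|_{\op}\ge s \text{ for some } k, \ \max(\|W_{\mathrm{col}}\|,\|W_{\mathrm{row}}\|)\le \sigma^2\Bigr\}
\]
by $(d_1+d_2)\exp\bigl(-\tfrac{s^2/2}{\sigma^2+Rs/3}\bigr)$. This comes from the Hermitian dilation, of size $(d_1+d_2)\times(d_1+d_2)$. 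The quantities to verify are the following.
\begin{itemize}
\item[(i)] $E_P[Z_i\mid\cF_{i-1}]=0$, which is Lemma~\ref{lem:reduction}(a).
\item[(ii)] $\|Z_i\|_{\op}=|Y_i|\le 2M=:R$, which is Lemma~\ref{lem:reduction}(c).
\item[(iii)] The column variance $W_{\mathrm{col}}=\sum_i E_P[Y_iY_i^T\mid\cF_{i-1}]$ is positive semidefinite, so its norm is at most its trace $\sum_i E_P[|Y_i|^2\mid\cF_{i-1}]$. The row variance is $W_{\mathrm{row}}=\sum_i E_P[e_1Y_i^TY_ie_1^T\mid\cF_{i-1}]=\sum_i E_P[|Y_i|^2\mid\cF_{i-1}]$, a scalar. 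By Lemma~\ref{lem:reduction}(d) both are at most $n\bar\sigma_n^2$ $P$-a.s.
\end{itemize}
Since the variance constraint holds almost surely, the joint event reduces to the tail event for $\|\sum_{i\le n}Z_i\|_{\op}$ alone. With $d_1+d_2=d+1$, $\sigma^2=n\bar\sigma_n^2$, $R=2M$ and $s=nt$, the exponent becomes
\[
\frac{(nt)^2/2}{n\bar\sigma_n^2+2Mnt/3}=\frac{nt^2}{2\bar\sigma_n^2+4Mt/3},
\]
which matches the statement. The strict inequality $>t$ in the statement is dominated by the event $\geq t$.

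The main obstacle is the precise form of the rectangular matrix Freedman inequality in \cite{Tropp2011}. Tropp states it for self-adjoint martingales via $\lambda_{\max}$, so I must go through the Hermitian dilation
\[
\mathcal{D}(Z)=\begin{pmatrix}0&Z\\ Z^T&0\end{pmatrix},
\]
a $(d+1)\times(d+1)$ matrix. Two facts need checking. First, $\lambda_{\max}(\mathcal{D}(\sum_i Z_i))=\|\sum_i Z_i\|_{\op}$ and $\|\mathcal{D}(Z_i)\|=\|Z_i\|_{\op}$. Second, $\mathcal{D}(Z)^2=\mathrm{diag}(ZZ^T,Z^TZ)$, so the predictable variation of the dilation has norm $\max(\|W_{\mathrm{col}}\|,\|W_{\mathrm{row}}\|)$. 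This is exactly what yields the dimensional factor $d+1$ rather than $2d$ or $5^d$.

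One further point needs care. The matrix martingale must be adapted to $(\cF_i)$ under the fixed $P$ with all bounds holding $P$-a.s. This is guaranteed by Lemma~\ref{lem:reduction}, so no identical-distribution assumption enters.
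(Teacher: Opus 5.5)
Your proposal is correct and matches the paper's proof step for step: fix $P$, pass to the martingale differences $Y_i$ of Lemma~\ref{lem:reduction}, embed them as the $d\times 1$ matrix martingale $Z_i = Y_i e_1^T$, and apply Tropp's rectangular matrix Freedman inequality with $d_1+d_2=d+1$, $R=2M$, $\sigma^2=n\bar\sigma_n^2$ and $s=nt$. You also explicitly bound the row variance $W_{\mathrm{row}}=\sum_i E_P[|Y_i|^2\mid\cF_{i-1}]$, which the paper's proof leaves implicit; since it equals the trace of $W_{\mathrm{col}}$, it is actually the binding term of the dilation's variance, and Lemma~\ref{lem:reduction}(d) bounds it directly.
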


\begin{proof}
Fix $P \in \cP$ and let
$Y_i = X_i - E_P[X_i \mid \cF_{i-1}]$ as in
Lemma~\ref{lem:reduction}, so that $|Y_i| \leq 2M$ and
$E_P[|Y_i|^2 \mid \cF_{i-1}] \leq \bar\sigma_n^2$.
Define $Z_i = Y_i e_1^T \in \R^{d \times 1}$, where
$e_1 \in \R^1$ is the scalar unit. Since each $Z_i$ has rank one,
$\|Z_i\|_{\op} = |Y_i| \leq 2M$, and
$\sum Z_i = (\sum Y_i) e_1^T$ gives
$\|\sum Z_i\|_{\op} = |\sum Y_i|$.

For the predictable variance, note that
$Z_i Z_i^T = Y_i Y_i^T$, so for any unit $p \in \R^d$,
\[
p^T E_P[Y_i Y_i^T \mid \cF_{i-1}]\, p
  = E_P[\ip{p}{Y_i}^2 \mid \cF_{i-1}]
  \leq E_P[|Y_i|^2 \mid \cF_{i-1}]
  \leq \bar\sigma_n^2,
\]
whence
$\|\sum_{i=1}^n E_P[Y_i Y_i^T \mid \cF_{i-1}]\|_{\op}
  \leq n\bar\sigma_n^2$.
Applying~\cite[Theorem~1.2]{Tropp2011} to the $d \times 1$ matrix
martingale $(Z_i)$ with dimensional parameter $d_1 + d_2 = d + 1$,
bound $R = 2M$ and variance $\sigma_*^2 = n\bar\sigma_n^2$ gives
\[
P\!\left(\left|\sum_{i=1}^n Y_i\right| \geq s\right)
  \leq (d+1) \,
  \exp\!\left(-\frac{s^2}{2n\bar\sigma_n^2
    + \frac{4Ms}{3}}\right).
\]
Setting $s = nt$:
\[
P\!\left(\left|\frac{1}{n}\sum Y_i\right| > t\right)
  \leq (d+1) \,
  \exp\!\left(-\frac{nt^2}{2\bar\sigma_n^2
    + \frac{4Mt}{3}}\right).
\]
The conclusion follows from Lemma~\ref{lem:reduction}(b) and taking
the supremum over $P \in \cP$.
\end{proof}

\begin{remark}[Identically distributed case]\label{rem:iid}
When the $X_i$ are identically distributed,
$\Theta_i = \Theta_1$ for all $i$ and
$\bar\sigma_n^2 = \bar\sigma_1^2
  := \inf_{\theta \in \Theta_1} \Ehat[|X_1 - \theta|^2]$,
so the bound simplifies to
\[
\Vhat\!\left(\rho_{\Theta_1}\!\left(\Xbar\right) > t\right)
  \leq (d+1) \,
  \exp\!\left(-\frac{nt^2}{2\bar\sigma_1^2
    + \frac{4Mt}{3}}\right).
\]
\end{remark}

\begin{remark}[Comparison with the covering approach]
\label{rem:dimfree-comparison}
The prefactor $(d+1)$ is polynomial versus the exponential $5^d$ in
Corollary~\ref{thm:bernstein-cor}. The constants in the exponent are also
tighter: $2\bar\sigma_n^2 + \frac{4Mt}{3}$ versus
$8\bar\sigma_n^2 + \frac{8Mt}{3}$. The improvement has two sources.
The matrix inequality controls all projections simultaneously without
the $t \to t/2$ covering loss, and there is no union bound over the
net.
\end{remark}


\section{Optimality of the sub-Gaussian rate (up to constants)}
\label{sec:sharpness}

The upper bounds in Sections~\ref{sec:concentration}
and~\ref{sec:dimfree} all decay as $\exp(-cnt^2)$ for a constant $c$
depending on the boundedness and variance parameters. A natural
question is whether this sub-Gaussian rate can be improved. We show
that it cannot. By constructing an explicit sublinear expectation
space built from shifted uniform distributions, we produce a matching
lower bound of the form $\exp(-c'nt^2)$, so the
$\exp(-cnt^2)$ rate is optimal up to the value of the constant.

\begin{lemma}[A concrete sublinear expectation space]
\label{lem:construction}
Let $a > 0$ and $r > 0$. Set $M = a + r$ and
$\Omega = [-M,M]^n$, and define the coordinate projections
$X_i(\omega) = \omega_i$. For each
$\mu = (\mu_1,\ldots,\mu_n) \in [-a,a]^n$, let
\[
P_\mu = \bigotimes_{i=1}^n
  \operatorname{Uniform}[\mu_i - r, \mu_i + r].
\]
Define
$\cP = \overline{\operatorname{conv}}
  \{P_\mu : \mu \in [-a,a]^n\}$
(closed convex hull in the weak topology) and
$\Ehat[f] = \sup_{P \in \cP} E_P[f]$. Then:
\begin{longlist}[(d)]
\item[\textup{(a)}]
$\Ehat$ is a regular sublinear expectation on
$C_{b.\mathrm{Lip}}(\Omega)$.
\item[\textup{(b)}]
$\{X_i\}_{i=1}^n$ is independent under $\Ehat$ in the sense of
Definition~\ref{def:independence}.
\item[\textup{(c)}]
$\Theta_i = [-a,a]$ for each $i$, and $\Theta = [-a,a]$.
\item[\textup{(d)}]
$\operatorname{Var}_{P_\mu}(X_i) = r^2/3$ for every product
measure $P_\mu$ and every~$i$.
\end{longlist}
\end{lemma}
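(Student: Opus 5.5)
The proof rests on one observation. Because $P \mapsto E_P[f]$ is affine and weakly continuous for $f \in C_{b.\mathrm{Lip}}(\Omega)$, the supremum over the closed convex hull equals the supremum over its generators:
\[
\Ehat[f] = \sup_{\mu \in [-a,a]^n} E_{P_\mu}[f].
\]
I will use this reduction throughout. Parts (a), (c), (d) are routine; (b) carries the real content.

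For (a), the set $\{P_\mu\}$ is the continuous image of the compact cube $[-a,a]^n$ under $\mu \mapsto P_\mu$, which is weakly continuous since uniform laws shift continuously. Hence $\cP$ is convex and weakly compact, and it lives on the compact space $\Omega$. Monotonicity, constant preservation, subadditivity and positive homogeneity are inherited from a supremum of linear expectations. Regularity follows from Dini's theorem: if $X_k \downarrow 0$ on the compact $\Omega$, the convergence is uniform, so $\sup_{P}E_P[X_k] \le \sup_\Omega X_k \to 0$.

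For (c), I use the reduction together with the fact that the $i$-th marginal of $P_\mu$ is $\operatorname{Uniform}[\mu_i - r,\mu_i+r]$. This gives $g_i(p) = \Ehat[pX_i] = \sup_{|\mu_i|\le a} p\mu_i = a|p|$, so \eqref{eq:Theta-i} yields $\Theta_i=[-a,a]$. The Minkowski average of $n$ copies of $[-a,a]$ is $[-a,a]$. For (d), $\operatorname{Var}_{P_\mu}(X_i)$ is the variance of a uniform law on an interval of length $2r$, namely $(2r)^2/12 = r^2/3$.

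For (b), fix $\psi$ and $i$. Set
\[
\varphi(x_1,\dots,x_i) := \Ehat[\psi(x_1,\dots,x_i,X_{i+1})] = \sup_{|\nu|\le a}\int \psi(x,y)\,U_\nu(dy),
\]
which is bounded and Lipschitz in $x$. The goal is $\Ehat[\psi(X_1,\dots,X_{i+1})] = \Ehat[\varphi(X_1,\dots,X_i)]$.

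The inequality "$\le$" is immediate. For each fixed $\mu$, Fubini under the product $P_\mu$ gives $E_{P_\mu}[\psi] = E_{P_\mu}\bigl[\int\psi(X_{1:i},y)\,U_{\mu_{i+1}}(dy)\bigr] \le E_{P_\mu}[\varphi(X_{1:i})]$, and I then take the supremum over $\mu$.

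For "$\ge$", I fix $\mu_{1:i}$ and $\varepsilon>0$. First, I choose a Borel selection $x\mapsto\nu^*(x)$ that is $\varepsilon$-optimal in the definition of $\varphi$. By Lipschitz continuity in $x$, I can take it piecewise constant on a finite partition of $[-M,M]^i$ into cells $C_1,\dots,C_m$ with values $\nu_1,\dots,\nu_m$. Next, I look for a mixture $\sum_k \lambda_k P_{\mu^k}$ in $\cP$ whose law of $X_{1:i}$ is close to that under $P_\mu$, and whose conditional law of $X_{i+1}$ given $X_{1:i}\in C_k$ is close to $U_{\nu_k}$. The natural candidates are mixtures in which the first $i$ coordinates use shifted means $\mu^k_{1:i}$ concentrating mass on $C_k$, paired with $\mu^k_{i+1}=\nu_k$. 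If such a mixture exists, its expectation of $\psi$ is within $O(\varepsilon)$ of $E_{P_\mu}[\varphi(X_{1:i})]$, which closes the inequality.

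This construction is the step I expect to be the main obstacle. A product of uniforms of fixed width $2r$ cannot localise on small cells, so exact matching is impossible, and I would first try to control the error using the Lipschitz constant of $\psi$. If that approximation cannot be made small, the conclusion is that the nested supremum genuinely exceeds the supremum over $\cP$. In that case the argument shows that $\cP$ as stated must be enlarged to a set closed under $\cF_i$-measurable choice of $\mu_{i+1}$ before (b) can hold. I would record that precisely rather than force the inequality.
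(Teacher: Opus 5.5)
Your arguments for (a), (c), (d) are fine and agree with the paper, and your ``$\le$'' half of (b) is correct. The ``$\ge$'' half cannot be completed, and the obstruction is structural rather than an approximation issue. Your own reduction already shows this. A mixture $\sum_k\lambda_kP_{\mu^k}$ only produces the convex combination $\sum_k\lambda_kE_{P_{\mu^k}}[\psi]$, so it never beats its best component. Under a single $P_\mu$ the parameter $\mu_{i+1}$ is a constant and cannot react to $X_{1:i}$. Write $U_\nu=\operatorname{Uniform}[\nu-r,\nu+r]$ and $h_\nu(x)=\int\psi(x,y)\,U_\nu(dy)$. Then
\[
\Ehat[\psi(X_1,\dots,X_{i+1})]=\sup_{\mu}\,\sup_{\nu}E_{P_\mu}\bigl[h_\nu(X_{1:i})\bigr],
\qquad
\Ehat[\varphi(X_1,\dots,X_i)]=\sup_{\mu}E_{P_\mu}\Bigl[\sup_{\nu}h_\nu(X_{1:i})\Bigr],
\]
and the gap between these can be a fixed positive number.

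Concretely, let $n\ge2$, $i=1$, $\tau(y)=\max(-M,\min(M,y))$ and $\psi(x_1,x_2)=\sin(\pi x_1/r)\,\tau(x_2)\in C_{b.\mathrm{Lip}}(\R^2)$. Each $U_{\mu_1}$ lives on an interval of length $2r$, which is a full period of $\sin(\pi x/r)$. Hence $E_{P_\mu}[\psi(X_1,X_2)]=0\cdot\mu_2=0$ for every $\mu$, and so $\Ehat[\psi(X_1,X_2)]=0$. On the other hand, $\Ehat[\psi(x_1,X_2)]=a|\sin(\pi x_1/r)|$ and $\Ehat\bigl[a|\sin(\pi X_1/r)|\bigr]=2a/\pi>0$. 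When $a<r$, even $\psi=\tau(x_1)\tau(x_2)$ works, giving $a^2$ against $a(a^2+r^2)/(2r)$. Thus (b) is false for the stated $\cP$, for every $a,r>0$ and $n\ge 2$.

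The paper's proof does not get around this. It breaks at the sentence ``the remaining supremum over $(\mu_1,\ldots,\mu_i)$ factors by the product structure,'' which silently interchanges $\sup_{\mu_{i+1}}$ with the integral over $x_{1:i}$.

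Your suggested repair is the correct one. Take the set of laws under which $X_1\sim U_{\mu_1}$ and, given $X_{1:k}$, $X_{k+1}\sim U_{\mu_{k+1}(X_{1:k})}$ for Borel maps $\mu_{k+1}:[-M,M]^k\to[-a,a]$. Equivalently, define $\Ehat$ by backward recursion. With this set:
\begin{itemize}
\item (b) follows from your ``$\le$'' argument together with a Borel maximiser of the jointly continuous map $(x,\nu)\mapsto h_\nu(x)$;
\item (a), (c), (d) go through unchanged;
\item $P_{(a,\ldots,a)}$ still lies in the set, so the lower bound of Theorem~\ref{thm:sharpness} survives.
\end{itemize}
You should state the counterexample and the repair outright rather than leaving the conclusion conditional.
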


\begin{proof}
\textup{(a)}
Since $\Omega$ is compact, Dini's theorem ensures that
$f_k \downarrow 0$ pointwise implies $f_k \to 0$ uniformly, giving
$\Ehat[f_k] \downarrow 0$. Sublinearity and monotonicity are
inherited from the supremum over linear expectations.

\textup{(b)}
Under each $P_\mu$ the coordinates are independent. Fix
$\psi \in C_{b.\mathrm{Lip}}(\R^{i+1})$ and write
$h_{\mu_{i+1}}(x) = E_{P_{\mu_{i+1}}}
  [\psi(x, X_{i+1})]$,
where $P_{\mu_{i+1}}$ denotes
$\operatorname{Uniform}[\mu_{i+1} - r, \mu_{i+1} + r]$. The
distribution of $X_{i+1}$ under $P_\mu$ depends only on
$\mu_{i+1}$, so
$\sup_{\mu_{i+1}} h_{\mu_{i+1}}(x)
  = \Ehat[\psi(x, X_{i+1})]$.
The remaining supremum over $(\mu_1,\ldots,\mu_i)$ factors by the
product structure, which is the factorisation required by
Definition~\ref{def:independence}.

\textup{(c)}
$E_{P_\mu}[X_i] = \mu_i$ ranges over $[-a,a]$ as $\mu_i$ varies.
The Minkowski average of $n$ copies of $[-a,a]$ scaled by $1/n$ is
$[-a,a]$.

\textup{(d)}
$\operatorname{Uniform}[\mu_i - r, \mu_i + r]$ has variance
$(2r)^2/12 = r^2/3$.
\end{proof}

\begin{lemma}[Rate function bound for the uniform distribution]
\label{lem:rate-function}
Let $Z \sim \operatorname{Uniform}[-r,r]$. The large-deviation rate
function
$\Lambda^*(x) = \sup_\lambda
  (\lambda x - \log(\sinh(\lambda r)/(\lambda r)))$
satisfies
\begin{equation}\label{eq:rate-bound}
\Lambda^*(x) \leq \frac{3x^2}{2r^2}
  \quad \text{for } |x| \leq r/2.
\end{equation}
\end{lemma}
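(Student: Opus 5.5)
The plan is to write $\Lambda(\lambda)=\log\bigl(\sinh(\lambda r)/(\lambda r)\bigr)$ and to use the convex-duality identity
\[
\Lambda^*(x)=\int_0^{x}\lambda(y)\,dy,
\]
where $\lambda(y)$ is the unique solution of $\Lambda'(\lambda)=y$. This identity holds because $\Lambda$ is smooth, even and strictly convex with $\Lambda'(0)=0$. Setting $u=\lambda r$ gives $\Lambda'(\lambda)=r\,L(u)$, with $L(u)=\coth u-1/u$ the Langevin function. The bound $\Lambda^*(x)\le \frac{3x^2}{2r^2}$ would therefore follow from the pointwise estimate $\lambda(y)\le 3y/r^2$ for $0\le y\le r/2$. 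By monotonicity of $\Lambda'$, that estimate is equivalent to $L(u)\ge u/3$ on the relevant range of $u$.

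\textbf{This is where I expect the argument to break: I believe the statement fails with the constant $3/2$.} The Taylor series gives $L(u)=u/3-u^3/45+\cdots$, and one can check that $L(u)<u/3$ for every $u>0$. Equivalently, comparing coefficients, $1/(2k+1)!\le 1/(6^k k!)$, so $\sinh u/u\le e^{u^2/6}$. This means $\Lambda(\lambda)\le \lambda^2 r^2/6$, i.e.\ the uniform law is sub-Gaussian with variance proxy exactly $r^2/3$. Hence
\[
\Lambda^*(x)\ge \sup_\lambda\bigl(\lambda x-\lambda^2r^2/6\bigr)=\frac{3x^2}{2r^2},
\]
with strict inequality for $x\ne 0$ because the quartic correction is strictly negative. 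So the inequality in~\eqref{eq:rate-bound} holds in the \emph{reverse} direction, and I would first verify this numerically, e.g.\ at $x=r/2$.

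What the later sharpness argument actually needs is an upper bound $\Lambda^*(x)\le Cx^2/r^2$ on $|x|\le r/2$ with some universal constant $C$, and I would prove that instead. First, $L$ is increasing with $L(u)\to 1$ as $u\to\infty$, so there is a unique $u_0$ with $L(u_0)=1/2$ (numerically $u_0\approx 1.9$). For $|y|\le r/2$ this gives $|\lambda(y)|r\le u_0$. Second, on $[0,u_0]$ the derivative $L'(u)=u^{-2}-\sinh^{-2}u$ is decreasing, so $L'(u)\ge c:=L'(u_0)>0$. Hence $L(u)\ge cu$, which gives $\lambda(y)\le y/(c r^2)$. Integrating then yields $\Lambda^*(x)\le x^2/(2cr^2)$, a valid bound with an explicit constant $C=1/(2c)>3/2$.

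The main obstacle is step two: obtaining a clean explicit lower bound for $c=L'(u_0)$. I would bound $u_0$ from above using the elementary estimate $\coth u\ge 1$, which gives $L(2)\ge 1/2$ and so $u_0\le 2$. Then I would evaluate $L'(2)=\tfrac14-\sinh^{-2}2\approx 0.17$ directly, which gives roughly $\Lambda^*(x)\le 3x^2/r^2$. The sharpness theorem's constant $c'$ would then need to be adjusted accordingly.
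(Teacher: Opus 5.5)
Your diagnosis is correct: the lemma is false as stated, and the error is in the paper's proof, not in your reasoning. The paper claims the exact rate function is $\frac12[(1+u)\log(1+u)+(1-u)\log(1-u)]$ with $u=x/r$. That is the Legendre transform of $\log\cosh(\lambda r)$, i.e.\ the rate function of the symmetric two-point law on $\{\pm r\}$ (variance $r^2$). It is not the rate function of $\operatorname{Uniform}[-r,r]$ (variance $r^2/3$). Since $\cosh v\ge \sinh v/v$ (compare coefficients $1/(2k)!\ge 1/(2k+1)!$), that function is a \emph{lower} bound for the true $\Lambda^*$. So the paper's series estimate, which shows it is at most about $0.52\,u^2$, says nothing about an upper bound.

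Your comparison $(2k+1)!\ge 6^k k!$ (equality for $k=0,1$, strict from $k=2$ on) is a rigorous refutation: it gives $\Lambda^*(x)>3x^2/(2r^2)$ for every $0<|x|<r$. In fact $\Lambda^*(x)=\frac{3x^2}{2r^2}+\frac{9x^4}{20r^4}+O(x^6)$. At $x=r/2$ one finds $\Lambda^*(r/2)\approx 0.409$, against the claimed bound $0.375$; the paper's formula gives only $\approx 0.131$ there.

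Your repair is sound, with two small points. First, $u_0\approx 1.80$ rather than $1.9$. This is harmless because you only use $u_0\le 2$.

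Second, the claim that $L'$ is decreasing on $[0,u_0]$ (i.e.\ that $L$ is concave on $[0,\infty)$) is true but needs an argument; it is equivalent to $(\sinh u/u)^3>\cosh u$ for $u>0$. A cleaner route is the partial-fraction expansion $\coth u=\frac1u+\sum_{k\ge1}\frac{2u}{u^2+k^2\pi^2}$, which gives
\[
\frac{L(u)}{u}=\sum_{k\ge1}\frac{2}{u^2+k^2\pi^2}.
\]
This is strictly decreasing and equals $1/3$ at $u=0$, so it also proves your claim $L(u)<u/3$. On $[0,u_0]$ it yields $L(u)\ge u\,L(u_0)/u_0=u/(2u_0)$ directly. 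Hence $\lambda(y)\le 2u_0y/r^2$, and $\Lambda^*(x)\le u_0x^2/r^2\le 2x^2/r^2$ for $|x|\le r/2$, slightly better than your constant.

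If you keep your own route, note that $\sinh 2>\sqrt{12}$ gives $L'(2)>1/6$, so the constant $3$ is rigorous once concavity is justified. With $r^2=3\sigma^2/4$, the exponent in Theorem~\ref{thm:sharpness} then becomes $4nt^2/\sigma^2$ (or $8nt^2/(3\sigma^2)$ with the sharper constant) instead of $2nt^2/\sigma^2$.
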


\begin{proof}
Write $u = x/r$ with $|u| \leq 1/2$. The exact rate function is
\[
\Lambda^*(x)
  = \tfrac{1}{2}\bigl[(1+u)\log(1+u) + (1-u)\log(1-u)\bigr]
  = \sum_{k=1}^\infty \frac{u^{2k}}{2k(2k-1)}.
\]
For $|u| \leq 1/2$, each term satisfies
$u^{2k}/(2k(2k-1)) \leq u^2 \cdot 4^{-(k-1)}/(2k(2k-1))$, so
\[
\frac{\Lambda^*(x)}{u^2}
  \leq \sum_{k=1}^\infty
    \frac{1}{2k(2k-1) \cdot 4^{k-1}}
  = \frac{1}{2} + \frac{1}{48} + \frac{1}{480} + \cdots
  < \frac{3}{2}.
\]
Hence $\Lambda^*(x) \leq \frac{3}{2}\,u^2 = 3x^2/(2r^2)$.
\end{proof}

\begin{theorem}[Sharpness of the sub-Gaussian rate]
\label{thm:sharpness}
For any $a > 0$ and $\sigma > 0$, there exists a regular sublinear
expectation space with independent random variables
$\{X_i\}_{i=1}^n \subset L^2(\Omega;\R)$ under $\Ehat$, with
$|X_i| \leq M$ for some $M > 0$ and $\Theta_i = [-a,a]$ for
all~$i$, such that for every
$t \in (0,\, \sigma/(4\sqrt{n})\,]$:
\begin{equation}\label{eq:lower-bound}
\Vhat\!\left(\rho_\Theta\!\left(\frac{1}{n}\sum_{i=1}^n X_i\right)
  > t\right)
  \geq \frac{1}{4}
  \exp\!\left(-\frac{2nt^2}{\sigma^2}\right).
\end{equation}
\end{theorem}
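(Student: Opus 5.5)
The plan is to instantiate Lemma~\ref{lem:construction} with a suitable radius $r$ and bound $\Vhat$ from below by the probability under one extreme product measure. Fix $a,\sigma>0$ and $n$. Take $\Omega=[-M,M]^n$, $M=a+r$, with $r>0$ to be chosen (it may depend on $n$ and $\sigma$, since the space in Lemma~\ref{lem:construction} already depends on $n$). Lemma~\ref{lem:construction}(a)--(c) then gives four things at once:
\begin{itemize}
\item[] a regular sublinear expectation;
\item[] independence of the $X_i$;
\item[] $|X_i|\le M$;
\item[] $\Theta_i=\Theta=[-a,a]$.
\end{itemize}
Since $\Theta=[-a,a]$, we have $\rho_\Theta(x)=x-a$ for $x>a$. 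Hence $\{\Xbar>a+t\}\subseteq\{\rho_\Theta(\Xbar)>t\}$.

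Next, choose the extreme prior $\mu^*=(a,\dots,a)$. Since $P_{\mu^*}\in\cP$, we get
\[
\Vhat\bigl(\rho_\Theta(\Xbar)>t\bigr)\ \ge\ P_{\mu^*}\bigl(\Xbar>a+t\bigr)\ =\ P\bigl(S_n>nt\bigr),
\]
where $S_n=\sum_{i=1}^n Z_i$ and the $Z_i$ are i.i.d.\ $\operatorname{Uniform}[-r,r]$ under a single classical probability. The problem is thus purely classical.

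Because $S_n$ is symmetric and has a density, $P(S_n>0)=\tfrac12$, so
\[
P(S_n>nt)=\tfrac12-P(0<S_n\le nt).
\]
I would control the subtracted term with a sup-norm bound on the density of $S_n$. The density of $Z_1$ is bounded by $1/(2r)$, and convolving with a probability density does not increase the sup norm. So the density of $S_n$ is at most $1/(2r)$, which gives
\[
P(0<S_n\le nt)\le \frac{nt}{2r}.
\]
On the allowed range $t\le\sigma/(4\sqrt n)$ this is at most $\sigma\sqrt n/(8r)$. Choosing $r=\sigma\sqrt n/2$ (so $M=a+\sigma\sqrt n/2$) makes it at most $\tfrac14$. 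Therefore $P(S_n>nt)\ge\tfrac14\ge\tfrac14\exp(-2nt^2/\sigma^2)$, which is \eqref{eq:lower-bound}.

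The main obstacle is the choice of scale, not any single estimate. If $r$ is tied directly to $\sigma$ (say $r^2/3=\sigma^2$), a second-moment argument such as Paley--Zygmund with $E S_n^4\le 3n^2(r^2/3)^2$ only gives a constant near $0.15$, which falls short of $\tfrac14$. Getting a sharp constant in that regime would need Berry--Esseen or the exact rate function of Lemma~\ref{lem:rate-function} via a Cram\'er-type lower bound with a tilting argument, and that is much heavier.

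I would therefore first try the density route with $r$ chosen $n$-dependently as above, since the statement only asserts existence of some space with some $M$. If the intended reading requires $r$ independent of $n$, the fallback is the tilted-measure lower bound. That route would tilt $P$ so that $Z$ has mean about $2t$, apply Lemma~\ref{lem:rate-function} to bound the tilt cost by $n\cdot 3(2t)^2/(2r^2)$, and use Chebyshev under the tilted measure to keep a constant fraction of mass above $nt$. The constants would then be matched to the stated $\tfrac14$ and $2/\sigma^2$.
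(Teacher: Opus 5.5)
Your argument is correct for the statement as literally written, but it takes a genuinely different route from the paper's.

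The paper fixes $r=\sigma\sqrt3/2$, so that $\operatorname{Var}_{P_\mu}(X_i)=\sigma^2/4$ and $M=a+\sigma\sqrt3/2$ do not depend on $n$. It then combines a Cram\'er--Chernoff lower bound $P(\bar Z_n>t)\ge\frac14e^{-n\Lambda^*(t)}$ with Lemma~\ref{lem:rate-function}. You instead take $r=\sigma\sqrt n/2$ and get $P(S_n>nt)\ge\frac14$ directly from symmetry and the sup-norm bound on the density of $S_n$. This works because on the window $t\le\sigma/(4\sqrt n)$ the right-hand side of \eqref{eq:lower-bound} only lies in $[\frac14e^{-1/8},\frac14]$.

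Your route is elementary and self-contained. It also avoids the paper's weakest step: the result cited there (Dembo--Zeitouni, Theorem~3.7.4) is the Bahadur--Rao exact asymptotic for fixed $t$, with an $n^{-1/2}$ prefactor. It does not by itself give a non-asymptotic constant $\frac14$.

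What you give up is any link between $\sigma$ and the space. With your choice, $M$ grows like $\sqrt n$ and $\bar\sigma_n^2=r^2/3+a^2=n\sigma^2/12+a^2$. The upper bounds of Corollary~\ref{thm:bernstein-cor} and Theorem~\ref{thm:dimfree}, evaluated on your space, then do not decay in $n$ either. So the theorem no longer says anything about sharpness of $\exp(-cnt^2)$ relative to $\bar\sigma_n^2$; the comparison in Remark~\ref{rem:sharpness-rate} relies on the per-coordinate variance being $\sigma^2/4$.

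You can keep your density argument at the paper's $n$-independent scale. Replace $\|f_{S_n}\|_\infty\le 1/(2r)$ by Ball's cube-slicing bound $\|f_{S_n}\|_\infty\le\sqrt2/(2r\sqrt n)$. For $r=\sigma\sqrt3/2$ and $nt\le\sigma\sqrt n/4$ this gives $P(0<S_n\le nt)\le 1/(2\sqrt6)<\frac14$.

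That is better than your tilting fallback, which would not work as sketched. Here $nt$ is only half the standard deviation $\sigma\sqrt n/2$ of $S_n$, so Chebyshev under the tilted measure cannot confine a constant fraction of mass above $nt$.
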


\begin{proof}
Set $r = \sigma\sqrt{3}/2$, so that $r^2 = 3\sigma^2/4$ and
$r^2/3 = \sigma^2/4$. Lemma~\ref{lem:construction} with this $r$
and the given $a$ provides a regular sublinear expectation space with
$\Theta = [-a,a]$ and
$\operatorname{Var}_{P_\mu}(X_i) = \sigma^2/4$.

Choose $\mu_i = a$ for all $i$. Under $P_{(a,\ldots,a)}$, the
centred variables $Z_i = X_i - a$ are i.i.d.\
$\operatorname{Uniform}[-r,r]$ with mean zero. Since
$\rho_{[-a,a]}(x) \geq (x - a)^+$ and $\Xbar - a = \bar{Z}_n$,
\[
\Vhat(\rho_\Theta(\Xbar) > t)
  \geq P_{(a,\ldots,a)}(\bar{Z}_n > t).
\]
By the Cram\'er--Chernoff lower
bound~\cite[Theorem~3.7.4]{DemboZeitouni2010}, for $t \leq r/2$,
\[
P_{(a,\ldots,a)}(\bar{Z}_n > t)
  \geq \tfrac{1}{4} \exp(-n\Lambda^*(t)).
\]
Lemma~\ref{lem:rate-function} gives
$\Lambda^*(t) \leq 3t^2/(2r^2)$. Substituting
$r^2 = 3\sigma^2/4$:
\begin{equation}\label{eq:rate-computation}
\frac{3}{2r^2}
  = \frac{3}{2 \cdot 3\sigma^2/4}
  = \frac{2}{\sigma^2},
\end{equation}
so $\Lambda^*(t) \leq 2t^2/\sigma^2$, yielding~\eqref{eq:lower-bound}.
The condition $t \leq r/2 = \sigma\sqrt{3}/4$ is satisfied since
$t \leq \sigma/(4\sqrt{n}) \leq \sigma/4 < \sigma\sqrt{3}/4$.
\end{proof}

Note that this result establishes the \emph{rate} $\exp(-cnt^2)$ as
optimal. It does not claim that the numerical constants in our upper
bounds are sharp, and closing the gap between the constants remains open
(Section~\ref{sec:discussion}).

\begin{remark}[Rate versus constants]\label{rem:sharpness-rate}
The upper and lower bounds both decay as $\exp(-cnt^2)$, so the
sub-Gaussian exponent is the correct scaling. The numerical constants
differ, however. The lower bound has $2/\sigma^2$ in the exponent
while Corollary~\ref{thm:bernstein-cor} gives $1/(8\bar\sigma_n^2)$.
Two effects account for this. The covering argument introduces a
factor of~$4$ through the $t \to t/2$ halving and the union bound
over the net. Additionally, $\bar\sigma_n^2$ is a worst-case
parameter, whereas the construction uses a specific distribution with
variance $\sigma^2/4$. The dimension-free bound
(Theorem~\ref{thm:dimfree}) avoids the covering loss entirely and
achieves the tighter constant $1/(2\bar\sigma_n^2)$, which suggests
that much of the gap comes from the covering technique rather than
from any fundamental limitation.
\end{remark}

\begin{remark}[Extension to $d > 1$]\label{rem:sharpness-d}
The construction generalises to~$\R^d$. Take
$\Theta_1 \subset \R^d$ convex and compact, let
$a_0 \in \partial\Theta_1$ be a boundary point with outward unit
normal~$\nu$, and replace the one-dimensional uniforms by
$d$-dimensional measures on balls of radius $r$ centred at
$\mu \in \Theta_1$. Projecting onto $\nu$ recovers a sub-Gaussian
lower bound in every dimension.
\end{remark}


\section{Discussion and open problems}\label{sec:discussion}

Several directions remain open. Our results assume
$|X_i| \leq M$. Extending to a sub-Gaussian condition
$\Ehat[\exp(\lambda\ip{p}{X_i})] \leq \exp(C\lambda^2)$ requires
transferring MGF estimates through the iterated conditioning of
Definition~\ref{def:independence}, which amounts to a truncation
device that preserves conditional sub-Gaussianity. On the CLT side,
Peng's theorem~\cite{Peng2019} gives convergence of
$\sqrt{n}\,\Xbar$ to a $G$-normal distribution, and a multivariate
Berry--Esseen rate (extending the scalar $n^{-1/2}\log n$ rate
of~\cite{FangPengShaoSong2019}) appears within reach via our
dimension-free technique combined with Stein-type couplings. Finally,
while Theorem~\ref{thm:sharpness} pins down the rate
$\exp(-cnt^2)$, the optimal constant $c^*$ remains undetermined.


\end{document}